\newcommand{\R}{{\mathbb R}}
\newcommand{\be}{\begin{eqnarray}}
\newcommand{\ben}{\begin{eqnarray*}}
\newcommand{\en}{\end{eqnarray}}
\newcommand{\enn}{\end{eqnarray*}}
\newcommand{\ba}{\backslash}
\newcommand{\pa}{\partial}
\newcommand{\ov}{\overline}
\newcommand{\G}{\Gamma}
\newcommand{\om}{\omega}
\newcommand{\wi}{\widetilde}
\title{Fast acoustic source imaging using multi-frequency sparse data}
\author{
Ala Alzaalig\thanks{Department of Mathematical Sciences, Michigan Technological University. Email: amalzaal@mtu.edu}
\and
Guanghui Hu\thanks{Beijing Computational Science Research Center, 100193, Beijing, China. Email: hu@csrc.ac.cn}
\and
Xiaodong Liu\thanks{Institute of Applied Mathematics, Academy of Mathematics and Systems Science, Chinese Academy of Sciences, 100190 Beijing, China.  Email: xdliu@amt.ac.cn}
\and
Jiguang Sun\thanks{Department of Mathematical Sciences, Michigan Technological University and College of Mathematical Sciences, University of Electronic Science and Technology of China. Email: jiguangs@mtu.edu}
}
\begin{document}                        % End of preamble, start of text.
\maketitle                              % Print title page.

\begin{abstract}
We consider the acoustic source imaging problems using multiple frequency data. Using the data of one observation direction/point, we prove that some information (size and location) of the source support can be recovered. A non-iterative method is then proposed to image the source for the Helmholtz equation using multiple frequency far field data of one or several observation directions. The method is simple to implement and extremely fast since it only computes an indicator function on the interested domain using only matrix vector multiplications. Numerical examples are presented to validate the effectiveness of the method.
\end{abstract}

%%%%% AMS/PACs/Keywords %%%%%%%%%%%
%\pacs{}
%\ams{}
\begin{keywords}
acoustic source imaging; multi-frequency data; sampling method; uniqueness.
\end{keywords}

\begin{AMS} 	
35P25, 35Q30, 45Q05, 78A46
\end{AMS}

\pagestyle{myheadings}
\thispagestyle{plain}
\markboth{A. Alzaalig, G. Hu, X. Liu and J. Sun}
{Inverse source problems with broadband sparse measurements}

\section{ Introduction}

Acoustic source imaging problems have attracted attention of many researchers because of applications such as identification
of pollution source in the environment \cite{El-Badia, El-Badia2}, sound source localization \cite{Schuhmacher} and determination of the source current
distribution in the brain from boundary measurements \cite{Dassios}.

In this paper, we consider the acoustic source scattering problem.
Let $k=\om/c>0$ be the wave number of a time harmonic wave, where $\om>0$ and $c>0$ denote the frequency and sound speed, respectively.
Fixing a wave number $k_{max}>0$, we consider the wave equation with
\be\label{kassumption}
k\in(0, k_{max}).
\en
Let
\ben
D:= \bigcup_{m=1}^{M}D_{m}\subseteq  \mathbb{R}^{n}
\enn
% \marginpar{\textcolor{red}{some times $\mathbb{R}^{d}$, other times $\mathbb{R}^{n}$. Please use $n$ since $d$ is also used as direction.}}
be an ensemble of finitely many well-separated bounded domains in $ \R^{n}$, $n=2, 3,$ i.e., $\overline{D_{j}}\cap\overline{D_{l}}\ =\emptyset$ for $j\neq l$.
For any fixed $k\in(0,k_{max})$, let $F(\cdot,k)\in L^{2}(\R^n)$ represent the acoustic source with compact support $D$.
Then the  time-harmonic wave  $u\in H_{loc}^{1}( \mathbb{R}^{n})$ radiated by $F$ solves the Helmholtz equation
\be\label{Helmholtzequation}
\Delta u(x,k) +k^{2} u(x,k)= F(x,k) \quad \quad in \; \;  \mathbb{R}^{n}
\en
and satisfies the Sommerfeld radiation condition
\be\label{SRC}
\lim_{r\longrightarrow \infty} r^{\frac{n-1}{2}}\Big(\frac{\partial u}{\partial r} - iku \Big) = 0, \quad \quad r=|x|.
\en
From the Sommerfeld radiation condition \eqref{SRC}, it is well known that the scattered field $u$ has the following asymptotic behavior
\ben
u(x, k)= C(k,n)\frac{e^{ik|x|}}{|x|^{\frac{n-1}{2}}} u^{\infty}(\theta_{x}, k)+ {\cal O}(|x|^{-\frac{n+1}{2}}), \quad \theta_{x}=\frac{x}{|x|}\in S^{n-1},\;
\enn
as $r=|x|\longrightarrow \infty,$ where $C(k,n)=e^{i\pi/4}/\sqrt{8\pi k}$ \;if $n=2$ and $C(k,n)=1/4 \pi$ if $n=3.$
The complex valued function $u^\infty=u^\infty(\theta_{x}, k)$ defined on the unit sphere $S^{n-1}$
is known as the far field pattern with $\theta_{x}\in S^{n-1}$ denoting the observation direction.

The radiating solution $u$ to the scattering problem \eqref{Helmholtzequation}-\eqref{SRC} takes the form
\be\label{us}
u(x,k)=\int_{D}\Phi_{k}(x,y)F(y,k)ds(y),\quad x\in\R^{n},
\en
with
\ben\label{Phi}
\Phi_{k}(x,y):=\left\{
              \begin{array}{ll}
                \frac{i}{4}H^{(1)}_0(k|x-y|), & n=2; \\
                \frac{ik}{4\pi}h^{(1)}_0(k|x-y|)=\frac{e^{ik|x-y|}}{4\pi|x-y|}, & n=3,
              \end{array}
            \right.
\enn
being the fundamental solution of the Helmholtz equation.
Here, $H^{(1)}_0$ and $h^{(1)}_0$ are, respectively, Hankel function and spherical Hankel function of the first kind and order zero.
From asymptotic behavior of the Hankel functions, we deduce that the corresponding far field pattern has the form
\be\label{FarFieldrep}
u^{\infty}(\theta_x,k)=\int_{D}e^{-ik\theta_x\cdot\,y}F(y,k)dy,\quad \theta_x\in\,S^{n-1}.
\en

The multi-frequency ISP is to determine the source $F$ from
\begin{itemize}
  \item the scattered fields $u(x, k),\,x\in \G,\,k\in (0,k_{max})$, where $\G$ is the measurement surface that contains $D$ in its interior; or
  \item the far field patterns $u^\infty(\theta_{x}, k)$, $\theta_{x}\in S^{n-1},\,k\in (0,k_{max})$.
\end{itemize}
Most of the works in literature assume that the source $F$ is independent of the wave number $k$ and that the measurements are taken at all observation spots,
i.e., all $x\in \G$ for the scattered data and all  $\theta_{x}\in S^{n-1}$ for the far field data.
It is well known that a source with an extended support cannot be uniquely determined from measurements at a fixed frequency
\cite{DevaneyMarengoLi, DevaneySherman}.
The use of the multiple frequency data for the ISPs provides an approach to obtain a unique solution to the inverse problems \cite{BaoLinTriki, EllerValdivia}.
Actually, it can be shown that the inverse problem is uniquely solvable and is even Lipschitz stable when the highest wave number $k_{max}$ exceeds a certain real number \cite{BaoLinTriki}.
Throughout this paper, we assume that the data can be measured on a band of wave numbers, i.e., $k\in(0,k_{max})$.

In recent years, many reconstruction methods have been proposed to solve the multi-frequency ISPs.
These methods can be classified into two categories: iterative methods and non-iterative methods.
% We refer to the iterative method along the wave numbers proposed by Bao et al. \cite{BaoLuRundellXu}, where a global convergence can even be proved.
While very successful in many cases, iterative methods are usually computationally expensive since they require the solution of a direct problem in every step \cite{BaoLuRundellXu, ZhangSun2015JCP}.
In contrast, the second group of reconstruction methods, i.e., non-iterative methods, avoids this problem, e.g., \cite{Potthast2010IP, Griesmaier2011IP,Sun2012IP, LiuSun2014IP,WangEtal2017IP, Griesmaier}.
% We refer to the Fourier method proposed by Wang \& Guo et.al\cite{WangEtal2017IP} and the factorization method studied by Griesmaier \& Schmiedecke \cite{Griesmaier}.

In this work, we consider the case of a more general source $F$, which may depend on the wave number.
In particular, we are interested in broadband sparse measurements.
For the case of near field measurements, we assume that the scattered field can only be measured at finitely many points,
\ben
\{x_1, x_2, \cdots, x_M\}=:\G_M\subset \G.
\enn
Accordingly, we obtain the following broadband sparse near field measurements
\be\label{eqivalentdatanear}
\{u(x_{m}, k)\,|\, x_m\in\G_M,\,\, k\in (0,k_{max})\}.
\en
For the case of far field measurements, we suppose that the far field data can only be measured in finitely many
observation directions,
\ben
\{\theta_1, \theta_2, \cdots, \theta_M\}=:\Theta_{M}\subset S^{n-1}.
\enn
Consequently, we obtain the following broadband sparse far field measurements
\be\label{eqivalentdatafar}
\{u^{\infty}(\theta_{m}, k)\,|\, \theta_m\in\,\Theta_{M},\,\, k\in (0,k_{max})\}.
\en

The inverse problem that we consider here is to deduce information on the support for the source $F$ from these data in \eqref{eqivalentdatanear} or \eqref{eqivalentdatafar}.
It is impossible to uniquely determine the source, see e.g., the counter-example \eqref{f1}-\eqref{f2} given in Section 2.2.
Nevertheless, we will show that partial information of the source, e.g., the location and the support, can be
approximately reconstructed from these data. Actually, based on the broadband sparse far field measurements, Sylvester and Kelly \cite{Sylvester} produced a convex polygon with
normals in the $\theta_m$ directions containing the source.
Such a reconstruction is done for those sources that are independent of the wave number.
The first contribution of this paper is to analyze the inverse problem with more general source that may depend on the wave number using both the near field and far field measurements.

The second contribution of this paper is to propose some novel non-iterative sampling methods for source support reconstruction with broadband sparse far field data.
These methods are very fast and simple to implement because the indicator functions behind them are based on the inner product only.
If the far field data can only be obtained in a single observation direction, we introduce an indicator to reconstruct a strip containing the support of
the source with the observation direction as its normal direction.
Thus, two linearly independent observation directions give a rough reconstruction for the support of the source.

The rest of this paper is arranged as follows. Some uniqueness results will be established in Section 2 by using multi-frequency data with a single measurement point.
Section 3 is devoted to a novel indicator function using complete far field data. We will show that the indicator decays as Bessel functions when sampling points are away from the source support.
In Section 4, we consider the cases when the far field data can be measured for finitely many observation directions.
In particular, the behavior of the indicator for a single observation direction will be established.
Section 5 contains several numerical examples in two dimensions to verify the effectiveness and efficiency of our method.

\section{Uniqueness results from multi-frequency data with a single measurement point}
This section is devoted to the study of what information can be obtained using multi-frequency data with a single measurement point.
Specifically, the measurements are either the broadband near field data $u(x,k)$, $k\in (0, k_{max})$ for a fixed measurement point $x \in\G_M$, or the broadband far field data
$u^{\infty}(\theta, k),$ $k\in (0, k_{max})$ with a single observation direction $\theta\in\,\Theta_{M}$.
%\marginpar{\textcolor{red}{should the sub-index j be delete? Since only one point/direction, using a sub-index is midleading}}
For a bounded domain $D$,
the $x$-annular hull for a single measurement point $x \in \G_M$ is defined by
\ben
A_{D}(x):=  \{ y\in \mathbb \R^{n}\; | \; \inf_{z\in D}|x-z| \leq |x-y|\leq \sup_{z\in D}|x-z|\},
\enn
which is the smallest annular (difference of two concentric discs/balls) with center at the measurement point $x \in \G_M$,
while the $\theta$-strip hull of $D$ for a single observation direction $\theta \in \,\Theta_{M}$ is defined by
\ben
S_{D}(\theta):=  \{ y\in \mathbb \R^{n}\; | \; \inf_{z\in D}z\cdot \theta \leq y\cdot \theta \leq \sup_{z\in D}z\cdot \theta\},
\enn
which is the smallest strip (region between two parallel hyper-planes) with normals in the directions $\pm \theta$ that contains $\overline{D}$.
See Figure \ref{SDAD} for these two hulls in two dimensions.

\begin{figure}
\begin{center}
\resizebox{0.45\textwidth}{!}{\includegraphics{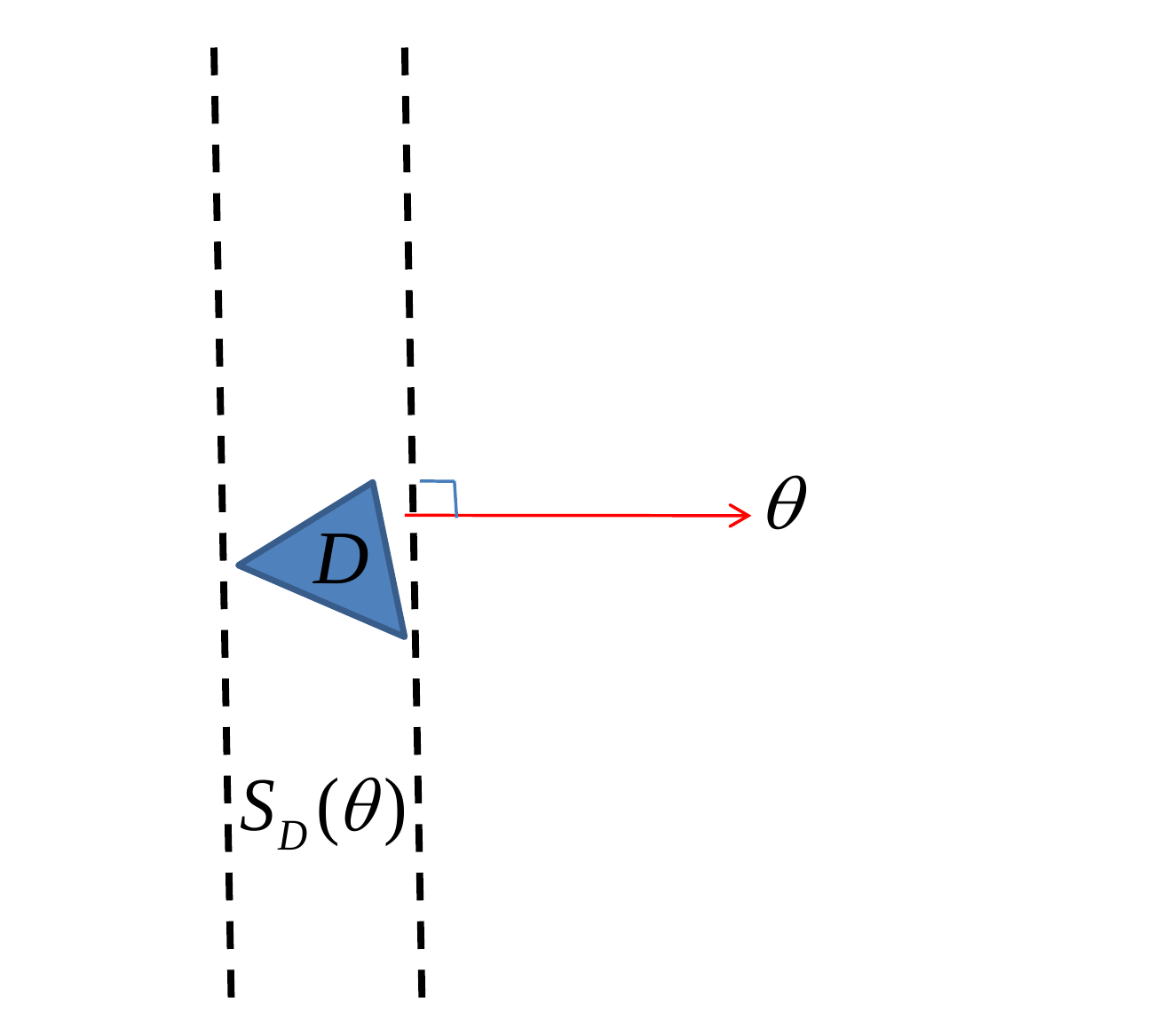}}
\resizebox{0.45\textwidth}{!}{\includegraphics{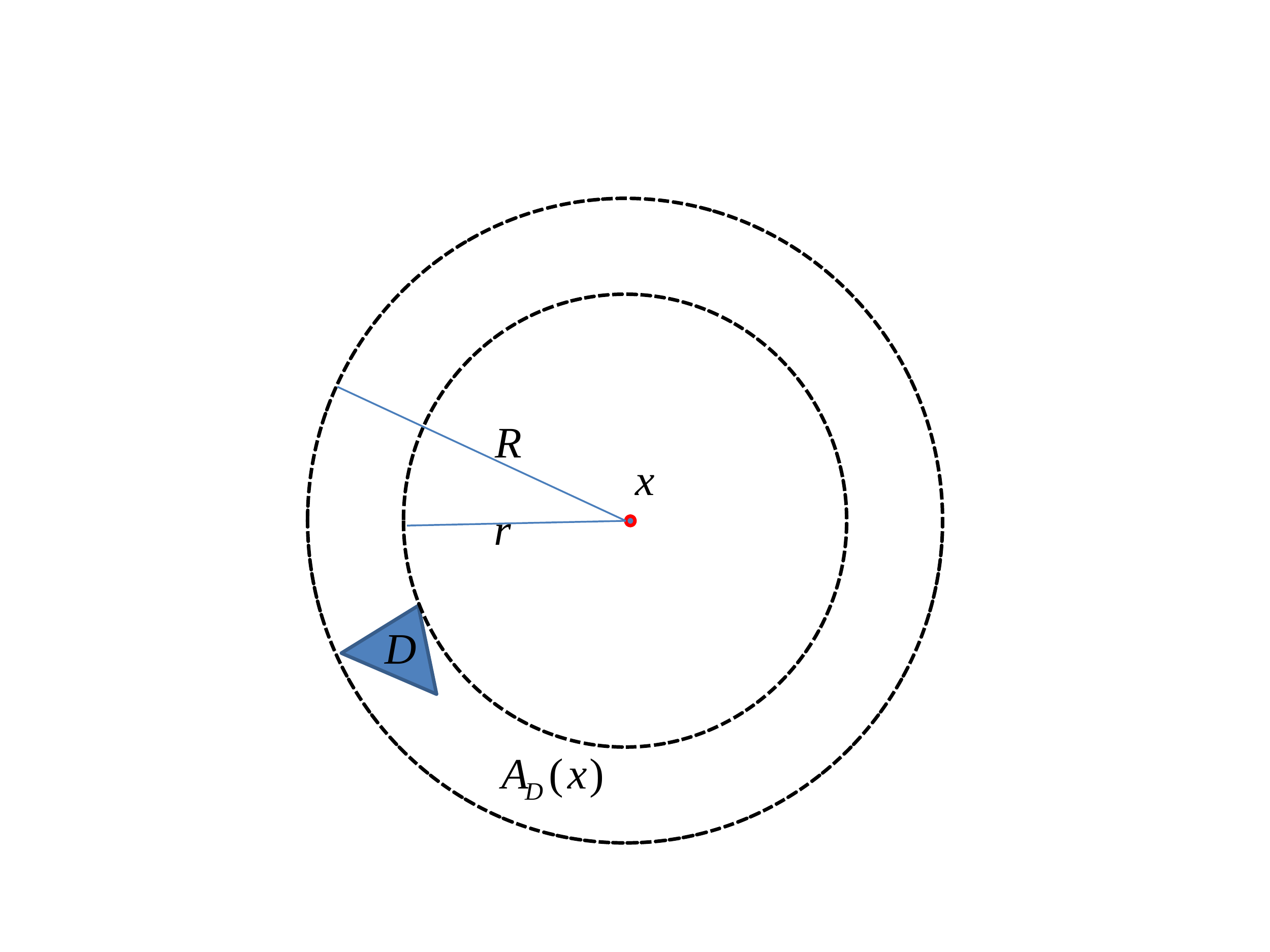}}
\end{center}
\caption{One object in $\R^2$. Left: the smallest strip $S_D(\theta)$ with normal in the observation direction $\theta$;
                    Right: the smallest annular $A_D(x)$ with center at the measurement point $x$.}
\label{SDAD}
\end{figure}

We assume that the source term $F(x,k)$ takes one of the following forms
\begin{itemize}
  \item $F(x,k)=f(x)g(k)$ with a function $g\in C(0,k_{max})$;
  \item $F(x,k)=f(x)\frac{e^{ik|x-z|}}{4\pi|x-z|}$ with a point $z\in \R^3\ba\ov{D}$;
  \item $F(x,k)=f(x)e^{ikx\cdot d}$ with a direction $d\in S^{n-1}$.
\end{itemize}

For the case of near field measurements, we can only prove the uniqueness result in $\R^3$. Our method seems to fail for the two dimensional case since the proof relies on the
Fourier transform, in which a representation in the form of exponential function is needed.
We remark that the last two cases also arise in the Born approximation for scattering from an inhomogeneous medium. Indeed, the mathematical model
for scattering by an inhomogeneous medium is
\ben
\Delta u^t+k^2qu^t=0.
\enn
The total field $u^t$ is in the form $u^t=u^i+u^s$,  where $u^i$, the incident wave, is a solution to the homogeneous Helmholtz equation
and $u^s$, the radiating scattered wave, is the unique outgoing solution to
\be\label{usmedium}
\Delta u^s+k^2u^s=k^2(1-q)(u^i+u^s).
\en
The incident wave $u^i$ of particular interest is either a plane wave $e^{ikx\cdot\theta}$ with $\theta\in S^{n-1}$ being the direction of propagation, or a point
source $\frac{e^{ik|x-z|}}{4\pi|x-z|}, x\neq z$.
The Born approximation is obtained when $u^s$ is small enough to be ignored on the right hand side of \eqref{usmedium}. Thus, the Born approximation $u_B$ to $u^s$ satisfies
\be\label{usborn}
\Delta u_B+k^2u_B=k^2(1-q)u^i.
\en
Dividing \eqref{usborn} by $k^2$, the right hand side of the equation \eqref{usborn} reduces to $F(x,k)$ considered
in the last two cases by taking different incident waves.

\subsection{$F(x,k)=f(x)g(k)$ with a given function $g\in C(0,k_{max})$}\quad\;\\

First, consider the case when the source $F(x,k)$ is a product of a spatial function $f$ and a frequency function $g$.
Here, $g\in C(0,k_{max})$ is a given nontrivial function of $k$. Thus, there exists an interval $I:=(a,b)\subset(0,k_{max})$
such that
\be\label{gassumption}
g(k)\neq 0, \quad k\in I\subset(0,k_{max}).
\en

For the near field case in $\R^3$, we assume the scattered field $u(x,k)$ is measured for all $k\in I$, but only at a fixed point $x\in\R^3\ba\ov{D}$.
Let $S_r(x)$ be the sphere with radius $r$ centered at $x$.
Recalling the solution representation \eqref{us}, we obtain that
\be\label{unifgN}
v(x,k):=\frac{u(x,k)}{g(k)}
=\int_{\R^3}f(y)\frac{e^{ik|x-y|}}{4\pi|x-y|}dy
=\int_{-\infty}^{\infty}\wi{f}(r)e^{ikr}dr,\quad k\in I,
\en
where
\be
\wi{f}(r):=\left\{
             \begin{array}{ll}
               \frac{1}{4\pi r}\int_{S_r(x)}f(y)ds(y), & \hbox{$r>0$;} \\
               0, & \hbox{$r\leq0$.}
             \end{array}
           \right.
\en

%\marginpar{Some explanation of the theorem?}
The following theorem shows that multi-frequency data can be used to determine an annular containing
the support of the target.

\begin{theorem}\label{theorem1}
Assume that $F(x,k)=f(x)g(k)$, where $g$ is a given continuous function satisfying \eqref{gassumption}, and that the set
\be\label{set1}
\{r\in (0,\infty)|\, S_r(x)\subset A_D(x), \wi{f}(r)=0\}
\en
has Lebesgue measure zero.
Then, the annular $A_D(x)$ can be uniquely determined by the scattered field $u(x,k)$ for all $k\in I$, but a fixed point $x\in\R^3\ba\ov{D}$.
\end{theorem}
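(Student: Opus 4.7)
The strategy is to reduce Theorem \ref{theorem1} to a one-dimensional Fourier inversion problem on the spherical averages $\wi{f}$ of $f$ about $x$, and then to read off the inner and outer radii of $A_D(x)$ from the essential support of $\wi{f}$. Since $g$ is continuous and nonvanishing on $I$, the ratio $v(x,k):=u(x,k)/g(k)$ is known for every $k\in I$, and identity \eqref{unifgN} displays $v(x,\cdot)$ as the one-dimensional Fourier transform of $\wi{f}$. Because $D$ is bounded and $x\in\R^3\ba\ov{D}$, the function $\wi{f}$ is supported in the compact interval $[r_{\min},r_{\max}]\subset(0,\infty)$ with $r_{\min}:=\inf_{z\in D}|x-z|>0$ and $r_{\max}:=\sup_{z\in D}|x-z|$; these two values are precisely the inner and outer radii of $A_D(x)$.

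The compact support of $\wi{f}$ together with the Paley--Wiener theorem forces $v(x,\cdot)$ to extend to an entire function of $k\in\C$. The values on the interval $I$, which has accumulation points in $\R$, therefore determine $v(x,k)$ on all of $\R$ by analytic continuation, and one-dimensional Fourier inversion then recovers $\wi{f}(r)$ for every $r\in\R$ (as, say, an element of $L^2(\R)$, noting that $\wi{f}\in L^2$ follows from $f\in L^2(\R^3)$ by Cauchy--Schwarz on the spheres). It remains to identify the two radii from $\wi{f}$: the inclusion $\mathrm{supp}(\wi{f})\subset[r_{\min},r_{\max}]$ is built into the definition of $\wi{f}$, while the reverse inclusion is exactly what hypothesis \eqref{set1} enforces by forbidding $\wi{f}$ from vanishing on a positive-measure subset of $[r_{\min},r_{\max}]$. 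Thus $r_{\min}$ and $r_{\max}$ are the essential infimum and supremum of $\mathrm{supp}(\wi{f})$, and $A_D(x)$ is reconstructed. Equivalently, two admissible sources producing the same data yield $v_1\equiv v_2$ on $I$, hence on $\R$, hence $\wi{f}_1=\wi{f}_2$ almost everywhere, and hypothesis \eqref{set1} applied to each then forces $A_{D_1}(x)=A_{D_2}(x)$.

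The main obstacle is not analytic but measure-theoretic: the Paley--Wiener extension and Fourier inversion are routine once $\wi{f}$ is recognized as a compactly supported function of one real variable, and what requires care is the passage from the essential support of $\wi{f}$ to the geometric annulus $A_D(x)$. Without \eqref{set1}, sign cancellations in $f$ across $S_r(x)$ could zero out $\wi{f}$ on stretches of $[r_{\min},r_{\max}]$, so that only a proper subannulus would be recovered; the hypothesis is precisely the minimal condition that rules this out and makes the identification of the radii genuine.
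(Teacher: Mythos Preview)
Your proof is correct and follows essentially the same route as the paper's own argument: divide by $g$ to obtain $v$, recognize $v(x,\cdot)$ as the Fourier transform of the compactly supported spherical average $\wi{f}$, extend analytically from $I$ to all of $\R$, invert to recover $\wi{f}$, and then use hypothesis \eqref{set1} to read off the radii of $A_D(x)$ from the (essential) support of $\wi{f}$. Your version is a bit more explicit---naming Paley--Wiener, noting $\wi{f}\in L^2$, and phrasing the final step in terms of essential support---but the underlying argument is identical to the paper's.
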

\begin{proof}
We firstly obtain from \eqref{unifgN} that $v(x, k)$ is an analytic function on the wave number $k$. Thus we have the data $v(x,k)$ for all $k\in \R$ by analyticity.
It is evident from \eqref{unifgN} that the data $v(x,k)$ is just the inverse Fourier transform of $\wi{f}$. Using Fourier transform, we deduce that $\wi{f}$ is uniquely determined by
the measurements $u(x,k)$ for all $k\in I$ at a fixed point $x\in\R^3\ba\ov{D}$.
Since the set in \eqref{set1} has Legesgue measure zero, we have
\ben
A_D(x)=\ov{\bigcup_{r\in (0,\infty)} \{S_r(x)| \wi{f}(r)\neq 0\}},
\enn
which implies the annular $A_D(x)$ is uniquely determined by $\wi{f}$, and also by $u(x,k)$ for all $k\in I$, but a fixed point $x\in\R^3\ba\ov{D}$.
The proof is complete.
\end{proof}

Now we turn to the far field measurements $u^{\infty}(\theta, k)$ for all $k\in (0,\infty)$, but a fixed observation direction $\theta\in S^{n-1}$.
Let
\[
\Pi_{\tau}:=\{y\in \R^n | y\cdot\theta+\tau=0\}
\]
be a hyperplane with normal $\theta$.
From the representation \eqref{FarFieldrep} for the far field patterns, by noting again the fact that $g(k)\neq 0$ for $k\in I$, we have
\be
v^{\infty}(\theta,k):=\frac{u^{\infty}(\theta,k)}{g(k)}=\int_{\R^n}e^{-ik\theta\cdot\,y}f(y)dy=\int_{\R}\hat{f}(\tau)e^{ik\tau}d\tau,\, \theta\in\,S^{n-1}, \quad
\en
where
\be
\hat{f}(\tau):=\int_{\Pi_\tau}f(y)ds(y).
\en
The analogous result of Theorem \ref{theorem1} is formulated in the following theorem for far field measurements.
\begin{theorem}\label{theorem2}
Assume $F(x,k)=f(x)g(k)$ with a given continuous function $g$ satisfying \eqref{gassumption}. If the set
\be\label{set2}
\{\tau\in\R|\, \Pi_\tau\subset S_D(\theta), \hat{f}(\tau)=0\}
\en
has Lebesgue measure zero,
then the strip $S_D(\theta)$ can be uniquely determined by the far field measurements $u^{\infty}(\theta,k)$ for all $k\in I$, but a fixed observation direction $\theta\in S^{n-1}$.
\end{theorem}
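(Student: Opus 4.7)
The plan is to mimic the proof of Theorem~\ref{theorem1} step by step, replacing the annular decomposition by a foliation of $\R^n$ into the hyperplanes $\Pi_\tau$ normal to $\theta$, and replacing the radial Fourier transform by the one-dimensional Fourier transform in the variable $\tau = -y\cdot\theta$. Since everything is already set up in the paragraph preceding the statement, the three conceptual steps are: (i) extend the data in $k$ by analyticity, (ii) invert a one-dimensional Fourier transform to recover $\hat f$, and (iii) recover $S_D(\theta)$ from $\hat f$ using the measure-zero hypothesis.

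For step (i), since $f$ is supported in the bounded set $D$, the formula $v^\infty(\theta,k)=\int_{\R^n}e^{-ik\theta\cdot y}f(y)\,dy$ defines an entire function of $k\in\C$. Because $g(k)\ne 0$ on $I$, the measurements $u^\infty(\theta,k)$ for $k\in I$ together with the given $g$ determine $v^\infty(\theta,k)$ on $I$, and analytic continuation extends this knowledge to all $k\in\R$. For step (ii), note that $\hat f$ is supported in the bounded interval $[-\sup_{z\in D}z\cdot\theta,\,-\inf_{z\in D}z\cdot\theta]$ and, by Cauchy--Schwarz on the compact slice $\Pi_\tau\cap D$, lies in $L^1(\R)$. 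The identity
\[
v^\infty(\theta,k)=\int_{\R}\hat f(\tau)e^{ik\tau}d\tau
\]
then exhibits $v^\infty(\theta,\cdot)$ as the inverse Fourier transform of $\hat f$, and Fourier inversion recovers $\hat f$ uniquely.

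For step (iii), the strip $S_D(\theta)$ is foliated by the hyperplanes $\Pi_\tau$ with $\tau$ ranging over the closed interval $[-\sup_{z\in D}z\cdot\theta,\,-\inf_{z\in D}z\cdot\theta]$. The measure-zero assumption on the set \eqref{set2} lets me write
\[
S_D(\theta)=\overline{\bigcup\{\Pi_\tau\;|\;\hat f(\tau)\ne 0\}},
\]
because the $\tau$'s ignored by the union form a null set whose corresponding hyperplanes are absorbed by the closure. Hence $S_D(\theta)$ is determined by $\hat f$, and therefore by $\{u^\infty(\theta,k)\,:\,k\in I\}$.

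The only delicate point, which I expect to be the main (though still minor) obstacle, is the passage from the interval $I$ to all of $\R$ and the application of Fourier inversion; both rely crucially on compact support of $f$ (giving entireness of $v^\infty$ in $k$ and $L^1$ regularity of $\hat f$). Unlike the near-field case in Theorem~\ref{theorem1}, there is no genuine dimensional obstruction here, since the far-field representation is already a pure exponential integral, so the same argument runs verbatim in $\R^2$ and $\R^3$.
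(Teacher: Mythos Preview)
Your proposal is correct and follows exactly the approach the paper indicates: the paper does not write out a separate proof of Theorem~\ref{theorem2} but simply states that it is ``quite similar to that of Theorem~\ref{theorem1}'' and that the far-field version works in both dimensions. Your three steps (analytic continuation in $k$, Fourier inversion to recover $\hat f$, and the measure-zero argument giving $S_D(\theta)=\overline{\bigcup\{\Pi_\tau:\hat f(\tau)\ne 0\}}$) are precisely the translation of the proof of Theorem~\ref{theorem1} to the hyperplane foliation, and your closing remark about the absence of a dimensional obstruction matches the paper's comment.
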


The proof of Theorem \ref{theorem2} is quite similar to that of Theorem \ref{theorem1}. However, the uniqueness result using far field measurements holds both in two and three
dimensions.

Finally, we give some remarks on the assumptions on the sets in \eqref{set1} and \eqref{set2}.

\begin{itemize}
  \item The sets in \eqref{set1} and \eqref{set2} have Lebesgue measure zero if the real part of a complex multiple of the spatial function $f$
 is bounded away from zero on their support, i.e., we assume that $f\in L^{\infty}(D)$ is such that there exist $\alpha \in \R$ and $c_{0} >0$ such that
\be\label{fassumption}
\Re(e^{i\alpha}f(x))\geq c_{0},\quad a.e.\,\, x\in D.
\en
  \item Theorems \ref{theorem1} and \ref{theorem2} are not true in general if the sets in \eqref{set1} and \eqref{set2}, respectively, have positive Lebesgue measure.
  For example, for $x=(x_1,x_2)\in\R^2$,
  we consider
  \be\label{f1}
  f_1(x)=\left\{
         \begin{array}{ll}
           1, & \hbox{$x_1\in (-1,1), x_2\in [1,2)$;} \\
           x_1, & \hbox{$x_1\in (-1,1), x_2\in (-1,1)$;} \\
           1, & \hbox{$x_1\in (-1,1), x_2\in (-2,-1]$;} \\
           0, & \hbox{otherwise.}
         \end{array}
       \right.
  \en
  and
  \be\label{f2}
  f_2(x)=\left\{
         \begin{array}{ll}
           1, & \hbox{$x_1\in (-1,1), x_2\in [1,2)$;} \\
           1, & \hbox{$x_1\in (-1,1), x_2\in (-2,-1]$;} \\
           0, & \hbox{otherwise.}
         \end{array}
       \right.
  \en
  Then $f_1\in L^2(\R^2)$ with compact support in $D_1=[-1,1]\times[-2,2]$, and $f_2\in L^2(\R^2)$ with compact support in $D_2:=D_2^{(1)}\cup D_2^{(2)}$, where
  $D_2^{(1)}=[-1,1]\times[-2,-1]$ and $D_2^{(2)}=[-1,1]\times[1,2]$. 
  A straightforward calculations shows that the corresponding far field patterns corresponding
  to these two different sources coincide for all wave numbers at the fixed observation direction $\theta=(0,1)$. We also refer to Figure \ref{LargeOneF1} for the corresponding
  numerical result.
\end{itemize}

\subsection{$F(x,k)=f(x)\frac{e^{ik|x-z|}}{4\pi|x-z|}$ with a given point $z\in \R^3\ba\ov{D}$}\quad\;\\

We consider the second case, i.e., the source $F$ takes the form
\ben
F(x,k)=f(x)\frac{e^{ik|x-z|}}{4\pi|x-z|}, \quad x\in\R^3\ba\{z\}, z\in \R^3\ba\ov{D}.
\enn
Let $S^{\prime}_r(x,z):=\{y\in \R^3 | |x-y|+|y-z|= r\}$ be an ellipsoid and $A^{\prime}_D(x,z)$ be the smallest annular-like domain, i.e., the difference of two ellipsoids,
such that $D$ is contained in its interior.
Recalling the solution representation \eqref{us}, we have that
\be
u(x,k)
=\int_{\R^3}f(y)\frac{e^{ik|x-y|}}{4\pi|x-y|}dy
=\int_{\R}\wi{f}^{\prime}(r)e^{ikr}dr,\quad k\in I,
\en
where
\be
\wi{f}^{\prime}(r):=\left\{
             \begin{array}{ll}
               \int_{S^{\prime}_r(x,z)}\frac{f(y)}{4\pi |x-y||z-y|}ds(y), & \hbox{$r>|x-z|$;} \\
               0, & \hbox{$r\leq |x-z|$.}
             \end{array}
           \right.
\en
Similar to the case considered in Theorem \ref{theorem1}, we have the following uniqueness result.
\begin{theorem}\label{theorem3}
Assume $F(x,k)=f(x)\frac{e^{ik|x-z|}}{4\pi|x-z|}$ with a given point $z\in \R^3\ba\ov{D}$. If the set
\be\label{set3}
\{r\in (0,\infty)|\, S^{\prime}_r(x,z)\subset A^{\prime}_D(x,z), \wi{f}^{\prime}(r)=0\}
\en
has Lebesgue measure zero.
Then the annular-like domain $A^{\prime}_D(x,z)$ can be uniquely determined by the scattered field $u(x,k)$ for all $k\in (0,k_{max})$, but a fixed point $x\in\R^3\ba\ov{D}, \,x\neq z$.
\end{theorem}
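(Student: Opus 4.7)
The plan is to mirror the argument used for Theorem \ref{theorem1}, since the representation
\[
u(x,k)=\int_{\R}\wi{f}^{\prime}(r)e^{ikr}\,dr
\]
already exhibits $u(x,\cdot)$ as the inverse Fourier transform of the compactly supported function $\wi{f}^{\prime}$. The only differences with the previous setting are that the radii $r$ now parametrize confocal ellipsoids $S^{\prime}_r(x,z)$ instead of spheres, and the weight $1/(4\pi|x-y||z-y|)$ replaces $1/(4\pi|x-y|)$ in the definition of $\wi{f}^{\prime}$. Neither difference affects the Fourier-analytic skeleton of the argument.

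First, I would observe that $u(x,k)$, as an integral of $f(y)e^{ik(|x-y|+|y-z|)}/(4\pi|x-y||y-z|)$ over the compact set $D$ (bounded away from $x$ and $z$ by assumption), depends analytically on $k\in\C$. Hence, although the measurements are only given for $k\in(0,k_{max})$, analytic continuation extends the data $u(x,k)$ to all $k\in\R$, and the same extension applies to the right-hand side of the representation above.

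Second, since $\wi{f}^{\prime}$ is supported in a bounded interval (namely $r\in[\inf_{y\in D}(|x-y|+|y-z|),\ \sup_{y\in D}(|x-y|+|y-z|)]$), the Fourier inversion theorem uniquely recovers $\wi{f}^{\prime}$ from $\{u(x,k):k\in\R\}$. Finally, by the hypothesis that the set in \eqref{set3} has Lebesgue measure zero, one can recover the annular-like domain as
\[
A^{\prime}_D(x,z)=\overline{\bigcup_{r\in(0,\infty)}\{\,S^{\prime}_r(x,z)\ :\ \wi{f}^{\prime}(r)\neq 0\,\}},
\]
which completes the identification of $A^{\prime}_D(x,z)$ from the data.

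The main obstacle, which is minor, is to justify the co-area-type change of variables implicit in the formula for $\wi{f}^{\prime}(r)$: one should check that for $r>|x-z|$ the function $y\mapsto|x-y|+|y-z|$ has non-vanishing gradient on $S^{\prime}_r(x,z)$ (clear away from the segment $\overline{xz}$), so that slicing $\int_{\R^3}f(y)\frac{e^{ik(|x-y|+|y-z|)}}{4\pi|x-y||y-z|}\,dy$ along level sets produces exactly the surface integral in the definition of $\wi{f}^{\prime}$, with the Jacobian contributing the denominators $|x-y||y-z|$. Once this is in place, the rest of the proof is essentially verbatim the argument given for Theorem \ref{theorem1}, and the assumption $x\neq z$ together with $x,z\notin\overline{D}$ ensures the integrand has no singularities on $D$.
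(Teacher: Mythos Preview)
Your proposal is correct and follows exactly the approach the paper intends: the paper gives no separate proof of Theorem~\ref{theorem3}, only the remark that it is ``similar to the case considered in Theorem~\ref{theorem1}.'' One small inaccuracy in your aside: the denominators $|x-y|\,|y-z|$ in $\wi{f}^{\prime}$ come from the product of the two Green's functions in the integrand, not from the co-area Jacobian $|\nabla(|x-\cdot|+|\cdot-z|)|^{-1}$; since that Jacobian factor is strictly positive on $D$ (as $x,z\notin\ov{D}$), it does not alter the support of $\wi{f}^{\prime}$ and the argument goes through unchanged.
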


\subsection{$F(x,k)=f(x)e^{ikx\cdot d}$ with a given direction $d\in S^{n-1}$}\quad\;\\

Considering now the case when $F(x,k)=f(x)e^{ikx\cdot d}$ with a given direction $d\in S^{n-1}$.
Recalling again the representation \eqref{FarFieldrep} of the far field pattern, we have, for $\theta\neq d$,
\be
u^{\infty}(\theta,k)=\int_{\R^n}e^{-ik(\theta-d)\cdot\,y}f(y)dy,\quad d, \theta\in\,S^{n-1},\, k\in (0,k_{max}).
\en
Define $\theta_d:=(\theta-d)/|\theta-d|\in S^{n-1}$. The inverse problem is equivalent to seek the source $F(x,k)=f(x)$ from the far field measurements
$u^{\infty}(\theta_d,k)$ for all $k\in (0, k_{max}|\theta-d|)$. From Theorem \ref{theorem2}, we have the following uniqueness result.
\begin{theorem}\label{theorem4}
Assume $F(x,k)=f(x)e^{ikx\cdot d}$ with a given direction $d\in S^{n-1}$. If the set
\be\label{set4}
\{\tau\in\R|\, \Pi_\tau\subset S_D(\theta-d), \hat{f}(\tau)=0\}
\en
has Lebesgue measure zero.
Then the strip $S_D(\theta-d)$ can be uniquely determined by the far field measurements $u^{\infty}(\theta,k)$ for all $k\in (0,k_{max})$, but a fixed observation direction $\theta\in S^{n-1}$.
\end{theorem}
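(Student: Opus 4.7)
The plan is to reduce Theorem \ref{theorem4} to Theorem \ref{theorem2} via a linear rescaling of the wave number. The trivial direction $\theta = d$ yields $u^{\infty}(d,k) = \int_{\R^n} f(y)\, dy$, which is constant in $k$ and provides no information about any strip, so I restrict attention to $\theta \neq d$. Setting $\theta_d := (\theta - d)/|\theta - d| \in S^{n-1}$ and $k' := k |\theta - d|$, the identity $u^{\infty}(\theta,k) = \int_{\R^n} e^{-ik(\theta - d)\cdot y} f(y)\, dy = \int_{\R^n} e^{-ik' \theta_d \cdot y} f(y)\, dy$ shows that the data $\{u^{\infty}(\theta,k):\, k\in (0,k_{max})\}$ is precisely the far field data, in direction $\theta_d$ and on the rescaled band $k'\in I':=(0,k_{max}|\theta - d|)$, of the wave-number-independent source $f$.

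With this reformulation I invoke Theorem \ref{theorem2} taking $g\equiv 1$, observation direction $\theta_d$, and interval $I'$ in place of $I$. The strip $S_D(\theta_d)$ agrees with $S_D(\theta - d)$, since the inequalities defining $S_D(\cdot)$ are invariant under positive rescaling of the direction vector. Likewise, the measure-zero hypothesis \eqref{set4} translates into the measure-zero hypothesis \eqref{set2} for direction $\theta_d$ under the positive linear relabeling $\tau\mapsto\tau/|\theta - d|$, which preserves Lebesgue null sets. Theorem \ref{theorem2} then gives that the strip, which coincides with $S_D(\theta - d)$, is uniquely recovered from the given measurements.

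The only delicate point is administrative: one must verify that the machinery of Theorem \ref{theorem2} (analyticity of $v^{\infty}(\cdot,k)$ in $k$, followed by Fourier inversion in $\tau$) is insensitive to the particular length of the frequency interval. This is the case, because knowledge on any non-degenerate open subinterval of positive wave numbers extends to all of $\R$ by analytic continuation before the Fourier transform is inverted; in particular the shrunken or enlarged band $I'$ poses no difficulty. Hence no substantive obstacle arises beyond the bookkeeping above, and Theorem \ref{theorem4} follows as a direct corollary of Theorem \ref{theorem2}.
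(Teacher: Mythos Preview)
Your proposal is correct and follows essentially the same route as the paper: you rewrite $u^{\infty}(\theta,k)=\int_{\R^n}e^{-ik(\theta-d)\cdot y}f(y)\,dy$, set $\theta_d=(\theta-d)/|\theta-d|$ and $k'=k|\theta-d|$, and then invoke Theorem~\ref{theorem2} with $g\equiv1$ on the rescaled frequency band. Your additional bookkeeping (the degenerate case $\theta=d$, the identification $S_D(\theta_d)=S_D(\theta-d)$, the preservation of the null-set hypothesis under the linear relabeling in $\tau$, and the remark that analytic continuation makes the length of the frequency interval irrelevant) is accurate and simply spells out what the paper leaves implicit.
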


\section{A novel indicator function with complete far field data}

We begin with the inverse source problem using far field patterns $u^{\infty}(\theta_{x}, k)$ for all observation directions $\theta_x\in S^{n-1}$ and all wave numbers
$k\in (0,k_{max})$.
To reconstruct the support of a general source $F(x,k)$, we introduce the following indicator
\be\label{indicatorfull}
I(z)= \Bigg|\int_{0}^{k_{max}} \int_{S^{n-1}} u^{\infty}(\theta_x,k)\,e^{ik\theta_x\cdot\, z} ds(\theta_x)\; dk\Bigg|, \quad z\in  \mathbb{R}^{n}.
\en
Note that no a priori information on the source is required and the indicator is simple to implement since only integral evaluations are needed.
Furthermore, we will show later that the indicator continuously depends on the noise in the data.

Inserting the representation \eqref{FarFieldrep} into the indicator \eqref{indicatorfull}, changing the order of the integration,
and using the Funk-Hecke formula \cite{LiuIP17},
we deduce that
\be
I(z)
&=&\Bigg|\int_{0}^{k_{max}} \int_{S^{n-1}} u^{\infty}(\theta_x,k)\,e^{ik\theta_x\cdot\, z} ds(\theta_x)\; dk\Bigg|\cr
&=&\Bigg|\int_{0}^{k_{max}} \int_{S^{n-1}} \int_{D}e^{-ik\theta_x\cdot\,y}F(y,k)dy\,e^{ik\theta_x\cdot\, z} ds(\theta_x)\; dk\Bigg|\cr
&=&\Bigg|\int_{0}^{k_{max}} \int_{D} \int_{S^{n-1}}e^{-ik\theta_x\cdot\,y}\,e^{ik\theta_x\cdot\, z} ds(\theta_x)\; F(y,k)dy\; dk\Bigg|\cr
&=&\Bigg|\int_{0}^{k_{max}} \mu \int_{D}g(k|y-z|)F(y,k)dy\; dk\Bigg|,
\en
where
\be\label{mu}
\qquad \mu=\left\{ \begin{array}{rcl}
 2\pi, & n=2 ;\\
 4\pi, & n=3,
\end{array}\right.
  \quad\mbox{and}\quad
  g(t)=\left\{ \begin{array}{rcl}
 J_{0}(t), & n=2 ;\\
 j_{0}(t), & n=3.
\end{array}\right.
\en
Here, $J_{0}$ and $ j_{0}$  are the Bessel function and spherical Bessel function of order zero, respectively.
This implies that the indicator function $I(z)$ is a superposition of the Bessel functions in {\rm 2D} and spherical Bessel functions in {\rm 3D}, although
we do not know the coefficients which depend on the source $F$ in $D$.
We have the following asymptotic formulas for the Bessel and spherical Bessel functions
\ben
J_{0}(t)&=&\frac{\sin t +\cos t}{\sqrt{\pi t}} \left\{  1+O\Big(\frac{1}{t}\Big)\right\}, \, \, as\, \, t  \rightarrow  \infty, \\
j_{0}(t)&=&\frac{\sin t}{t} \left\{  1+O\Big(\frac{1}{t}\Big)\right\}, \, \, as\, \, t  \rightarrow  \infty.
\enn
See Figure \ref{falphadecay} for the behavior of these two functions.
\begin{figure}
\begin{center}
\resizebox{0.45\textwidth}{!}{\includegraphics{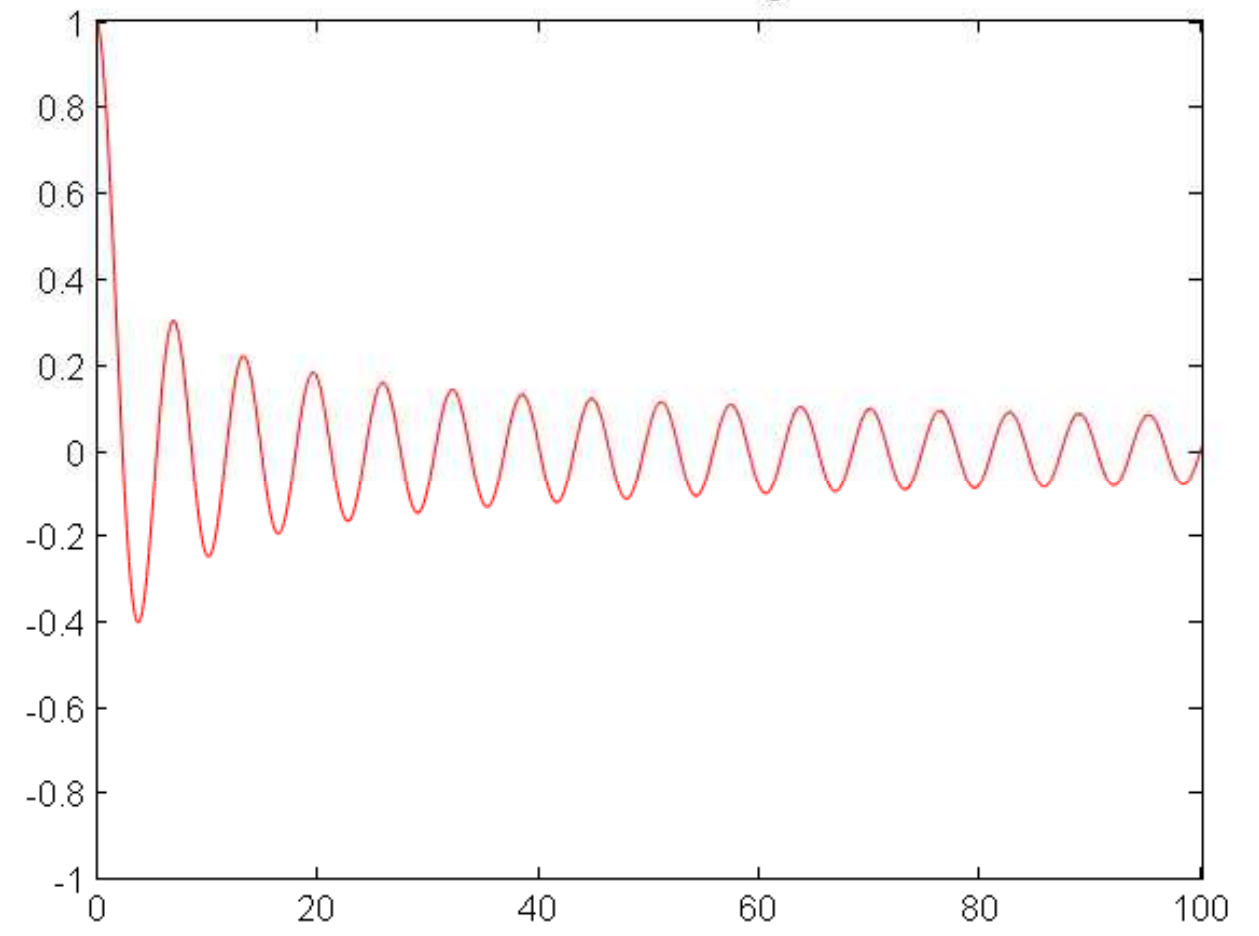}}
\resizebox{0.45\textwidth}{!}{\includegraphics{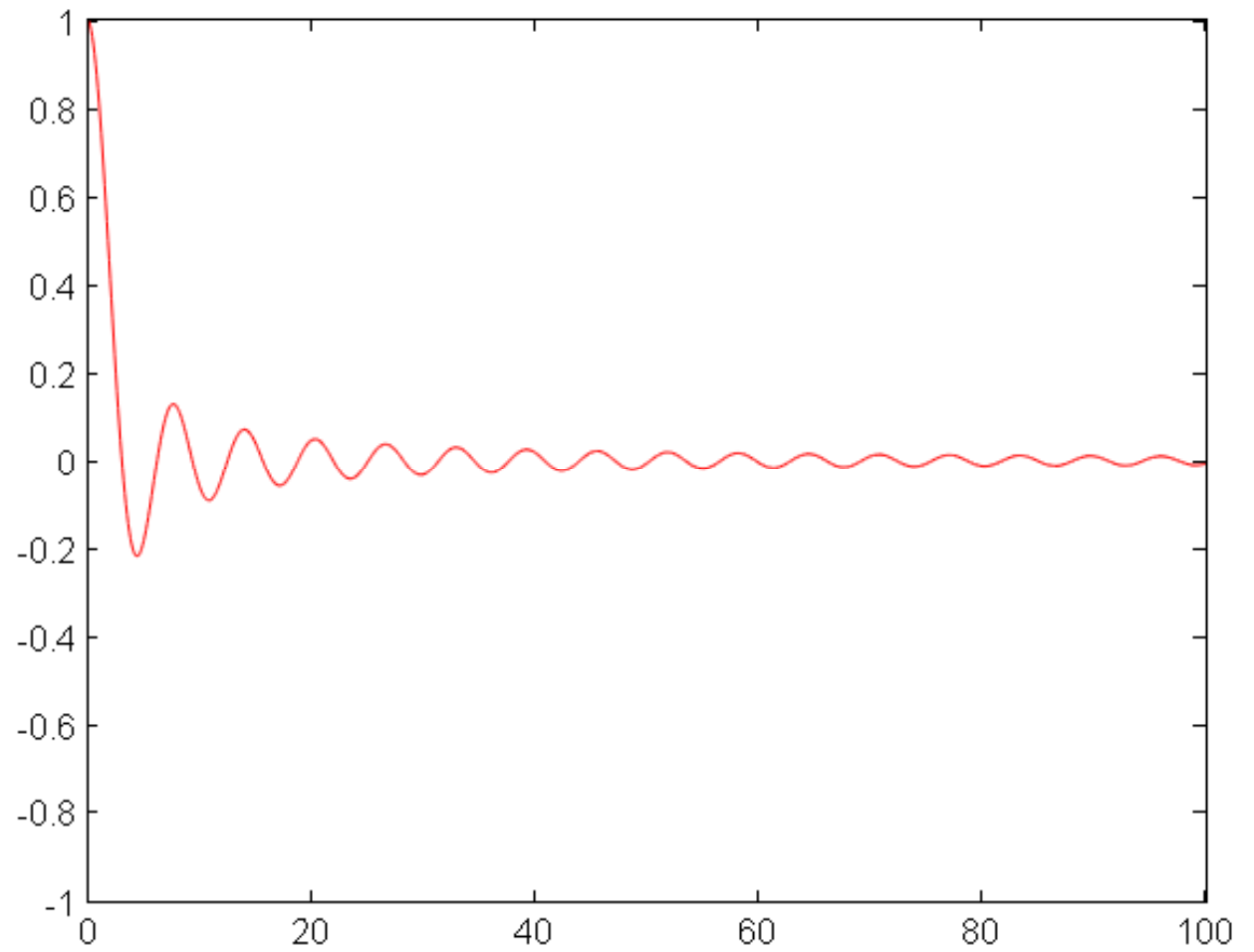}}
\end{center}
\caption{Decay behavior of the Bessel function $J_{0}$ (Left) and the spherical Bessel function $j_{0}$ (Right) .}
\label{falphadecay}
\end{figure}
{\em\bf Thus, we expect that the indicator function $I_{z}$ decays as the Bessel functions when the sampling point $z$ moves away from the support $D$.}

We end this section by a stability statement, which states that the indicator depends on the noise continuously.

\begin{theorem}{\rm (Stability statement).}
Let $u^{\infty}_{\delta}$ be the measured far field pattern with noise.  Then
\be
 |I(z)-I_{\delta}(z)|\leq  \;\mu \int_{0}^{k_{max}}\Big\| u^{\infty}(\cdot, k)-u^{\infty}_{\delta}(\cdot, k)\Big\|_{L^2(S^{n-1})}  dk,
\en
where $I_{\delta}(z)$ is the indicator functional with $u^{\infty}$ replaced by $u_{\delta}^{\infty}$, $\mu$ is a constant given by \eqref{mu}.
\end{theorem}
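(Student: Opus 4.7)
The plan is to exploit the reverse triangle inequality together with a norm inequality on the sphere. Since $I(z)$ and $I_\delta(z)$ are moduli of the integral expressions, the first step is to write
\[
|I(z)-I_\delta(z)| \le \left|\int_{0}^{k_{max}} \int_{S^{n-1}} \bigl(u^{\infty}(\theta_x,k)-u^{\infty}_{\delta}(\theta_x,k)\bigr)\,e^{ik\theta_x\cdot z}\,ds(\theta_x)\,dk\right|,
\]
which follows from $||a|-|b||\le |a-b|$ applied to the complex quantities inside the outer absolute value in \eqref{indicatorfull}.

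Next I would bring the absolute value inside both integrals using the triangle inequality for integrals, and observe that $|e^{ik\theta_x\cdot z}|=1$. This yields
\[
|I(z)-I_\delta(z)| \le \int_{0}^{k_{max}} \int_{S^{n-1}} \bigl|u^{\infty}(\theta_x,k)-u^{\infty}_{\delta}(\theta_x,k)\bigr|\,ds(\theta_x)\,dk.
\]
The inner integral is the $L^1(S^{n-1})$ norm of the noise; applying the Cauchy--Schwarz inequality against the constant function $1$ converts it into an $L^2(S^{n-1})$ bound multiplied by $\|1\|_{L^2(S^{n-1})}=\sqrt{|S^{n-1}|}$. With the convention \eqref{mu}, this surface measure equals $\mu$, producing the claimed constant in front of the time integral.

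There is essentially no obstacle: the proof is a routine chain of the reverse triangle inequality, Fubini, $|e^{ik\theta_x\cdot z}|=1$, and Cauchy--Schwarz on the sphere. The only mildly delicate point is keeping track of the constant arising from $\|1\|_{L^2(S^{n-1})}$ in $n=2$ versus $n=3$, which is exactly what the definition of $\mu$ in \eqref{mu} is designed to absorb.
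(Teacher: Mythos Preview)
Your approach is exactly the one used in the paper: reverse triangle inequality, bring the modulus inside, use $|e^{ik\theta_x\cdot z}|=1$, then Cauchy--Schwarz on $S^{n-1}$. One small slip: Cauchy--Schwarz against the constant $1$ yields the factor $\|1\|_{L^2(S^{n-1})}=\sqrt{|S^{n-1}|}=\sqrt{\mu}$, not $\mu$; since $\sqrt{\mu}\le\mu$ for $\mu\in\{2\pi,4\pi\}$ the stated bound with $\mu$ still follows (the paper records $\mu$ as well), but your sentence identifying ``this surface measure equals $\mu$'' with the final constant conflates $\mu$ and $\sqrt{\mu}$.
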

\begin{proof}
\ben
&&I(z)-I_{\delta}(z)\cr
&:=&\Big|\int_{0}^{k_{max}}\int_{S^{n-1}}u^\infty(\theta_x,k)e^{ik\theta_x\cdot z}ds(\theta_x)dk\Big|\cr
&& \qquad \qquad -\Big|\int_{0}^{k_{max}}\int_{S^{n-1}}u_{\delta}^\infty(\theta_x,k)e^{ik\theta_x\cdot z}ds(\theta_x)dk\Big|\cr\vspace{2cm}
&\leq&\int_{0}^{k_{max}}\int_{S^{n-1}}\Big|[u^\infty(\theta_x,k)-u_{\delta}^\infty(\theta_x,k)]e^{ik\theta_x\cdot z}\Big|ds(\theta_x)dk\cr
&\leq&\mu\int_{0}^{k_{max}}\Big\|u^\infty(\cdot,k)-u_{\delta}^\infty(\cdot,k)\Big\|_{L^{2}(S^{n-1})}dk,
\enn
where we have used the triangle inequality and the Cauchy-Schwarz inequality.
\end{proof}

\section{Multi-frequency ISPs with sparse far field data}
This section is devoted to a reconstruction method that efficiently utilizes multi-frequency information to reduce the number of sensors
and obtain a useful reconstruction of the support of the source.

For a single observation direction $\theta_m\in\Theta_M$, we introduce the following indicator
\be\label{indicator1}
I^{\theta_m}(z)= \Big|\int_{0}^{k_{max}}u^{\infty}(\theta_m, k)e^{ik\theta_m\cdot\,z}dk\Big|.
\en
While for all the available observation directions in $\Theta_M$, we introduce the indicator
\be\label{indicatorM}
I^{\Theta_M}(z):=\sum_{\theta_m\in\Theta_M}I^{\theta_m}(z).
\en

We begin with the behavior of the indicator $I^{\theta_m}$, which uses the multi-frequency far field patterns $u^{\infty}(\theta_m,k)$
with a single observation direction $\theta_m\in S^{n-1}$.
Let $\theta_m^{\perp}$ be a direction perpendicular to $\theta_m$ and we have
\be\label{I1behavior1}
I^{\theta_m}(z+\alpha\theta_m^{\perp})=I^{\theta_m}(z), \quad \forall z\in\R^n, \,\alpha\geq0,
\en
since $\theta_m^{\perp}\cdot \theta_m=0$.
This further implies that the indicator $I^{\theta_m}$ has the same value for sampling points in the hyperplane with normal direction $\theta_m$.

In the following, we assume that the source $F$ depends smoothly on the wave number $k$, say $C^1$.
Recall from \eqref{FarFieldrep} that the far field pattern has the following representation
\ben
\qquad u^{\infty}(\theta_m,k)=\int_{D}e^{-ik\theta_m\cdot\,y}f(y,k)dy,\quad\theta_m\in\Theta_M, \,k\in (0,k_{max}).
\enn
Inserting it into the indicator $I^{\theta_m}$ defined in \eqref{indicator1}, changing the order of integration, and integrating by parts, we have
\be\label{I1behavior2}
I^{\theta_m}(z)
&=&\left|\int_{D}\int_{0}^{k_{max}}e^{ik\theta_m\cdot\,(z-y)} F(y,k)dk dy \right|\cr
&=&\left|\int_{D}\frac{\mathcal {F}_z(y)}{\theta_m\cdot\,(z-y)}dy\right|,
\en
where the numerator $\mathcal {F}_z(y)\in L^{\infty}(D)$ is given by
\ben
\mathcal {F}_z(y):=e^{ik_{max}\theta_m\cdot\,(z-y)}F(y,k_{max})-F(y,0)-\int_{0}^{k_{max}}e^{ik\theta_m\cdot\,(z-y)}\frac{\pa F}{\pa k}(y,k)dk, \, y\in D.
\enn
It is clear that the indicator $I^{\theta_m}$ is a superposition of functions that decay as $1/|\theta_m\cdot\,(z-y)|$ as the sampling point $z$ goes away
from the strip $S_{D}(\theta_m)$.

In conclusion, the indicator $I^{\theta_m}$ has the same value for sampling points in the hyperplane with normal direction $\theta_m$,
and decays as the sampling point $z$ moves away from the strip $S_{D}(\theta_m)$. Thus, we expect that the value of the indicator $I^{\theta_m}$ is large inside the strip $S_{D}(\theta_m)$
and decays for sampling points moving away along the directions $\pm\theta_m$. Therefore, a strip $S_{D}(\theta_m)$ can be reconstructed using $I^{\theta_m}$.
A natural idea is to use the indicator $I^{\Theta_M}$ given in \eqref{indicatorM} to construct the so called $\Theta_M$-convex hull of $D$ \cite{Sylvester}
\ben
S_D(\Theta_M):=  \bigcap_{\theta_m\in \Theta_M}S_{D}(\theta_m).
\enn

\section{Numerical examples}
Now we present some examples to demonstrate the indicators proposed in the previous section
 in two dimensions using multiple frequency far field data. The synthetic data of the forward problems are computed using the equation \eqref{FarFieldrep}.
 Let $D$ be the compact support of $F$. We generate a triangular mesh ${\mathcal T}$ for $D$ with mesh size $h \approx 0.01$. For the direction
 \ben
 \theta:=(\cos\varphi,\sin\varphi)
 \enn
 and fixed $k$,
 the far-field pattern is approximated by
 \begin{equation}\label{uinftyh}
 u^{\infty}(\theta; k) \approx \sum_{T \in {\mathcal T}} e^{-ik \theta \cdot {y_T}} f(y_T) |T|,
 \end{equation}
 where $T \in {\mathcal T}$ is a triangle, $y_T$ is the center of $T$, and $|T|$ denotes the area of $T$.

For all examples, for $\theta_m \in \Theta_M$, we assume to have multiple frequency far field data
\[
u^{\infty}(\theta_m; k_{j}), \quad j=1, \, . \, . \, .\, N,
\]
where $N=20$, $k_{min}=0.5$, $k_{max}=20$ such that
\[
k_{j}:=(j-0.5)\Delta k,  \quad \Delta k:=\frac{k_{max}}{N}.
\]

Assume that the sampling domain is $\mathbb{S}:=[-4, 4] \times [-4, 4]$. The interval $[-4,4]$ is uniformly divided into $80$ intervals and we end up with $81 \times 81$ sampling points
uniformly distributed in $\mathbb{S}$. We denote by $Z$ the set of all sampling points.

\subsection{One observation direction}
We first consider the case of one observation direction $\theta\in\Theta_M$. We normalize the indicator function, i.e.,
the plot is for $I^{\theta}/M(I^\theta)$ where $M(I^\theta)$ is the largest element of $I^{\theta}(z), z \in Z$. Let $F=5$ and the support of $F$ is a rectangle given by $(1,2) \times (1, 1.6)$.
 In Fig.~\ref{OneOneF1}, we plot the indicators for four different observation angles $\varphi=-\pi/4, 0, \pi/8$ and $\pi/2$.
 The picture clearly shows that the source lies in a strip, which is perpendicular to the observation direction.
\begin{figure}
\begin{center}
\begin{tabular}{cc}
\resizebox{0.5\textwidth}{!}{\includegraphics{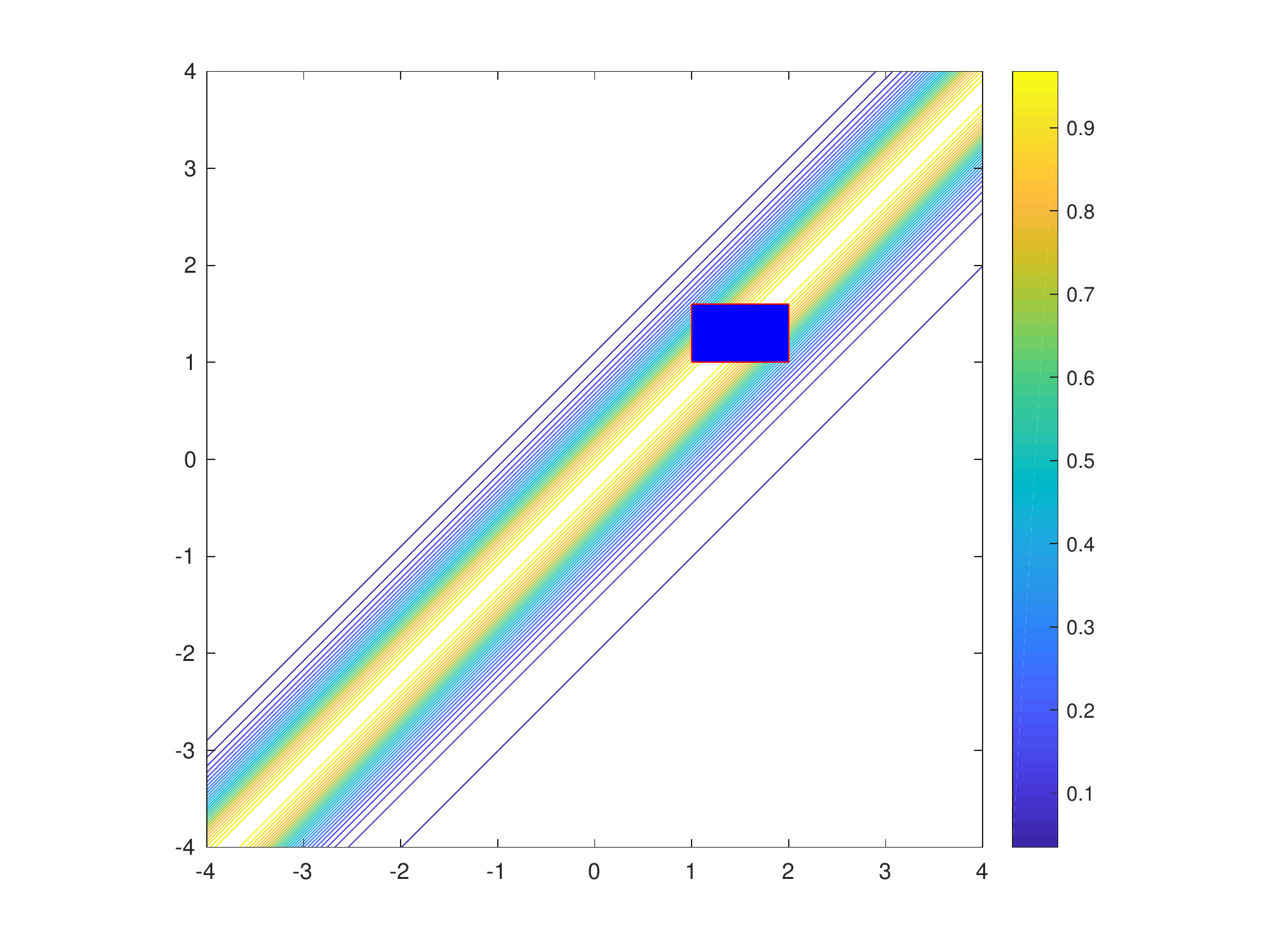}}&
\resizebox{0.5\textwidth}{!}{\includegraphics{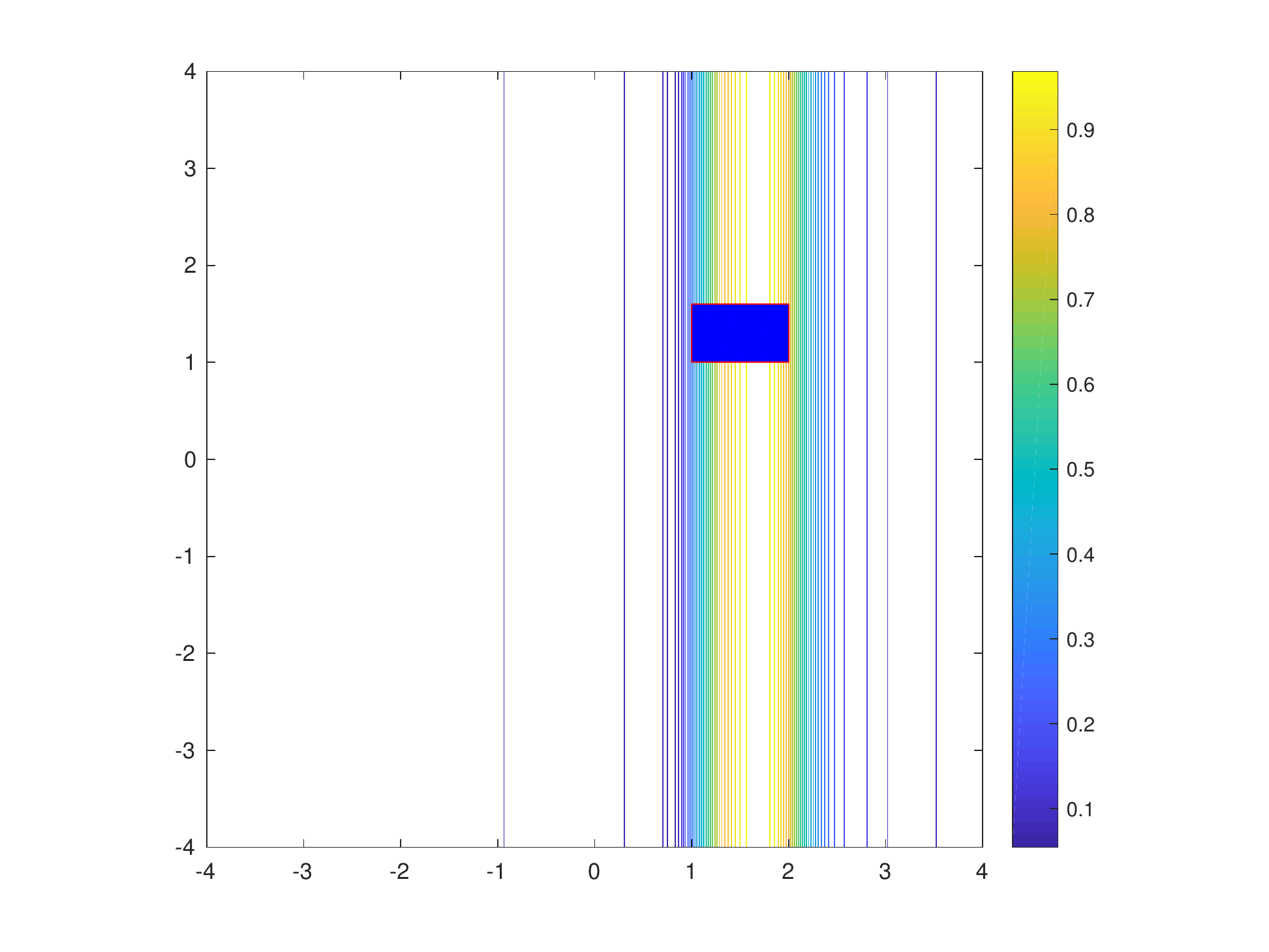}}\\
\resizebox{0.5\textwidth}{!}{\includegraphics{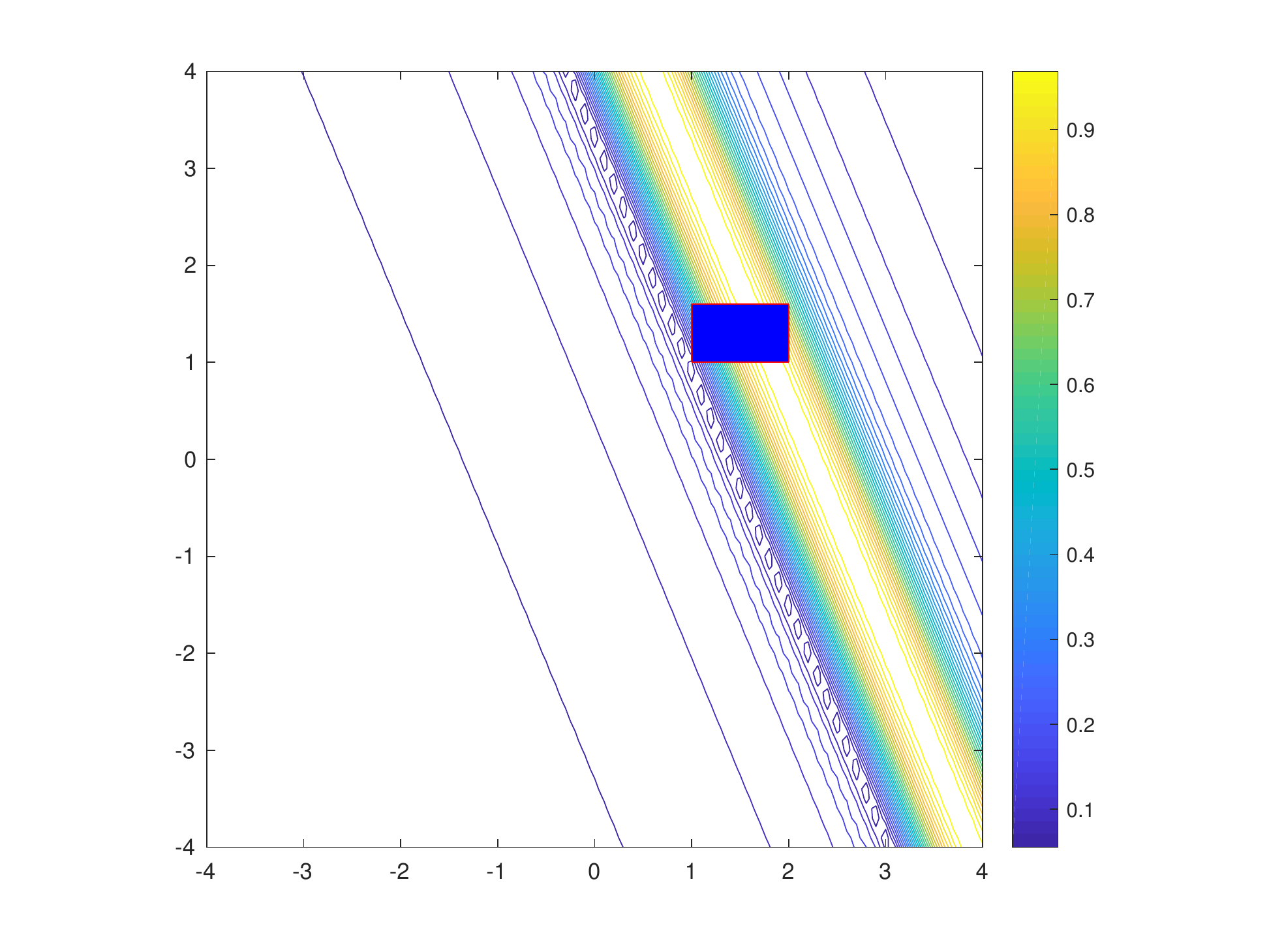}}&
\resizebox{0.5\textwidth}{!}{\includegraphics{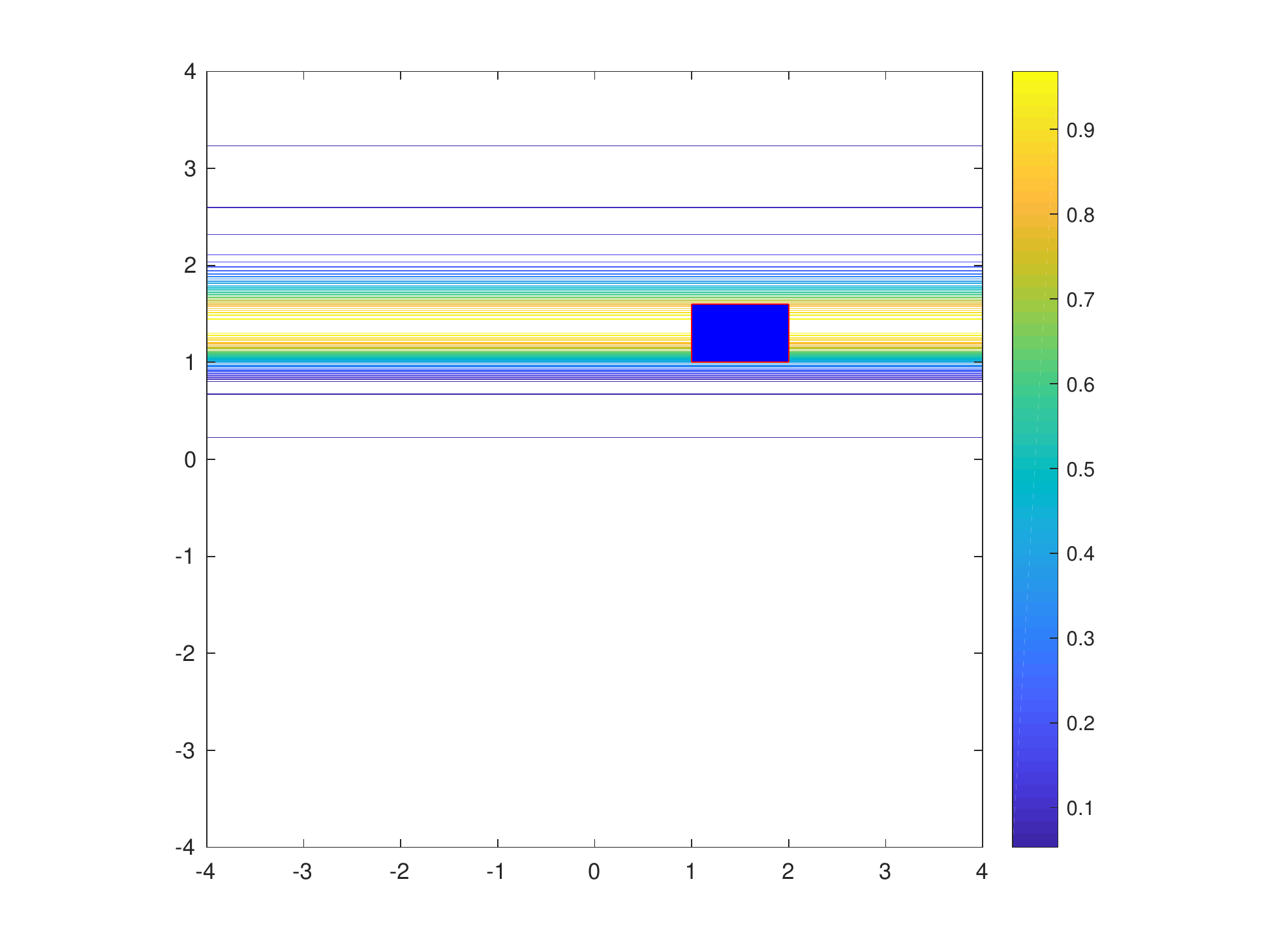}}
\end{tabular}
\end{center}
\caption{Reconstructions using a single observation direction for one object when $F=5$. Top Left: $\varphi = -\pi/4$. Top Right: $\varphi = 0$. Bottom Left: $\varphi = \pi/8$. Bottom Right: $\varphi = \pi/2$.}
\label{OneOneF1}
\end{figure}

In Fig.~\ref{TwoOneF1}, we show the results when the support of the source has two components. One is a rectangle given by $(1, 1.6) \times (1, 1.4)$. The other one is a disc with
radius $0.2$ centered at $(-0.5, -0.5)$. For different observation directions, strips containing the objects are reconstructed effectively.
\begin{figure}
\begin{center}
\begin{tabular}{cc}
\resizebox{0.5\textwidth}{!}{\includegraphics{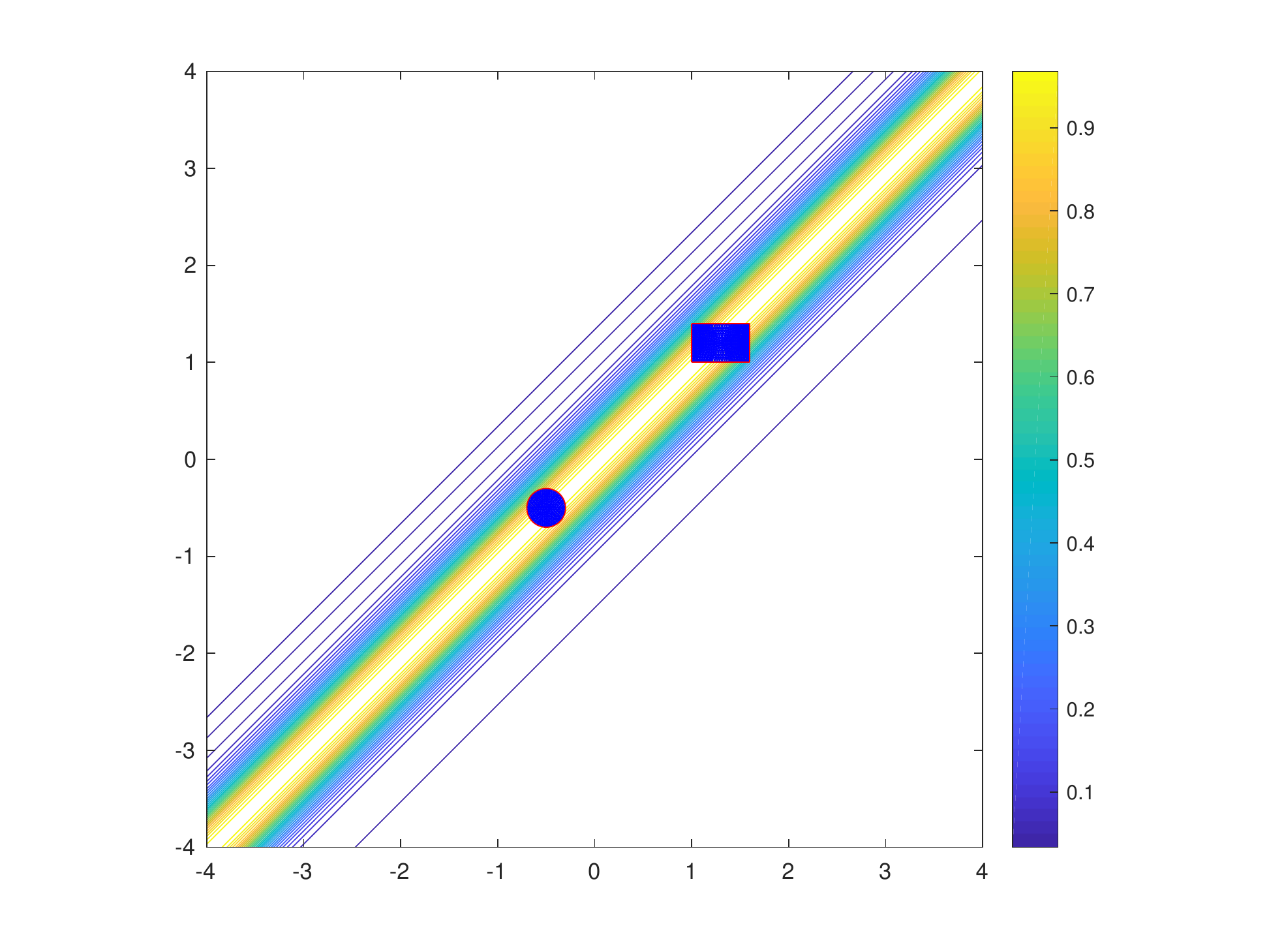}}&
\resizebox{0.5\textwidth}{!}{\includegraphics{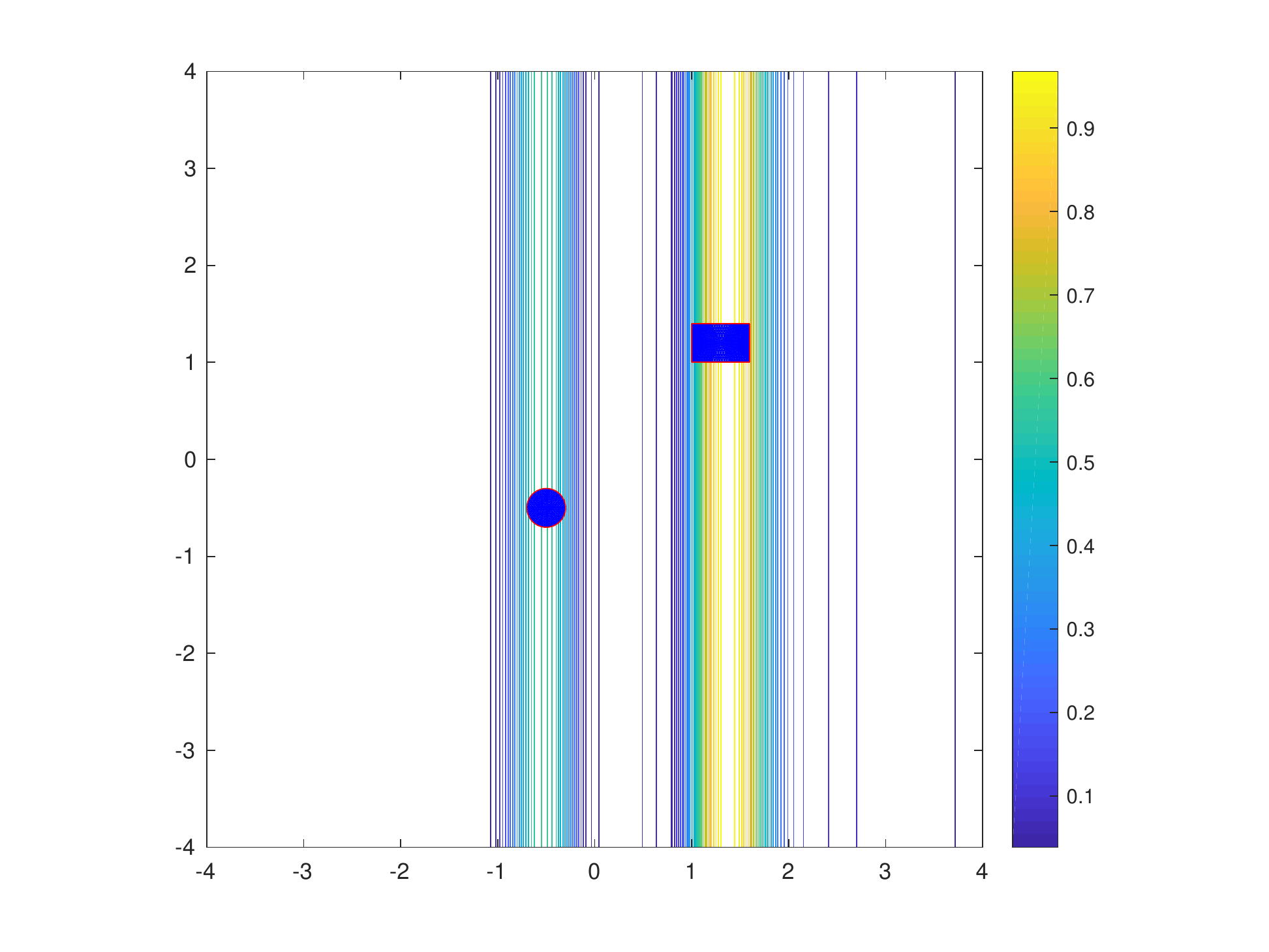}}\\
\resizebox{0.5\textwidth}{!}{\includegraphics{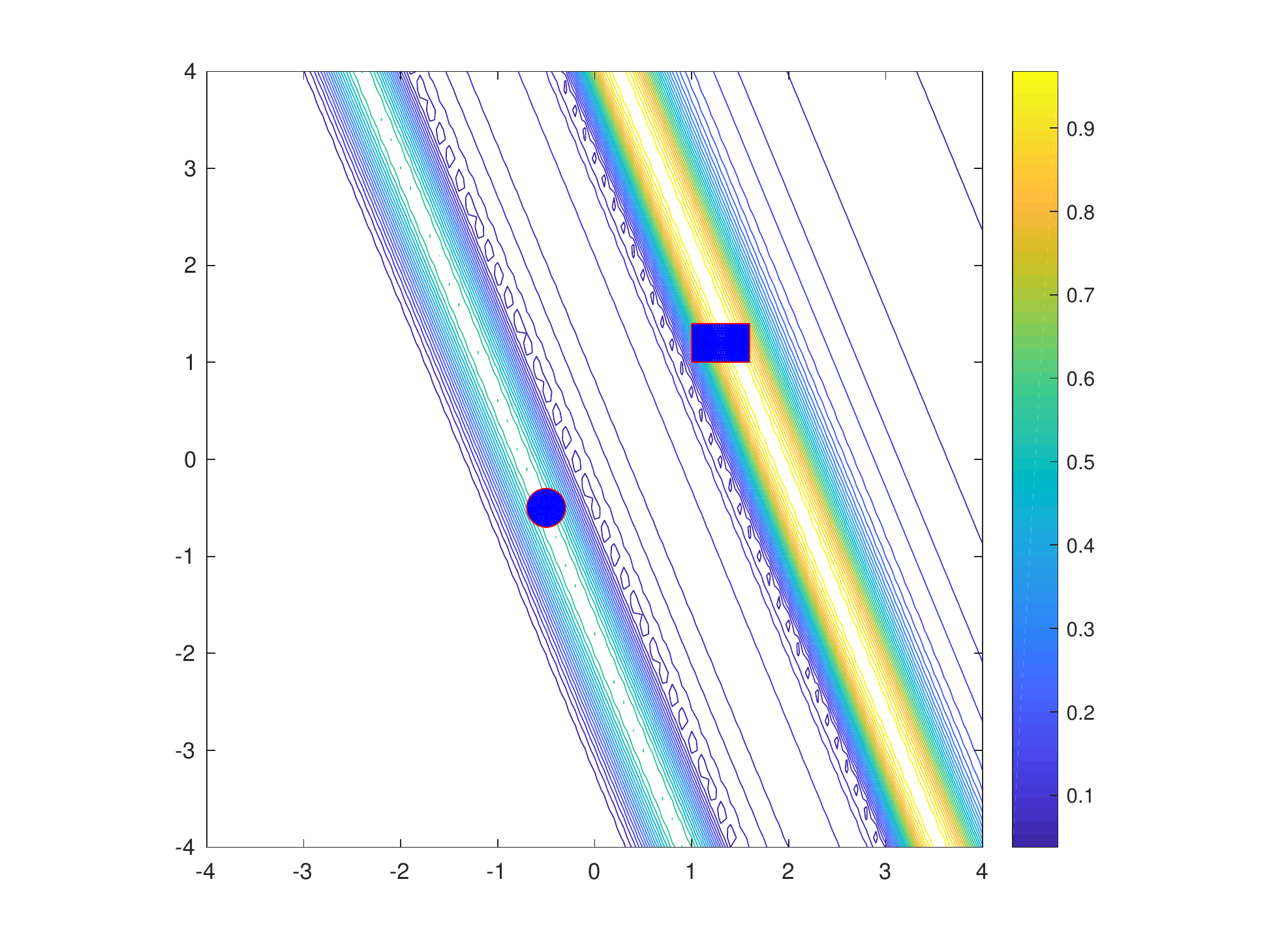}}&
\resizebox{0.5\textwidth}{!}{\includegraphics{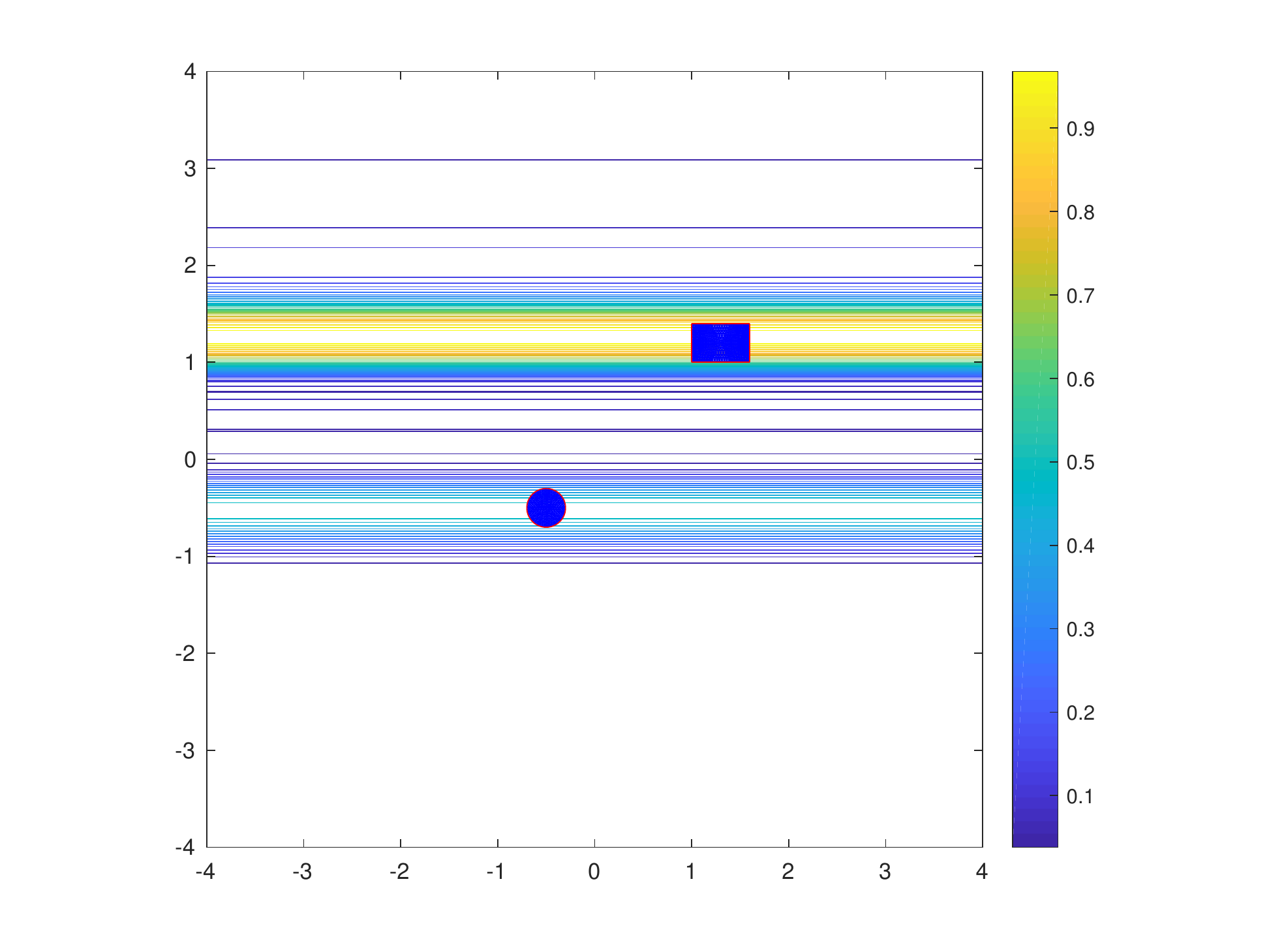}}
\end{tabular}
\end{center}
\caption{Reconstructions using a single observation direction for two objects when $F=5$. Top Left: $\varphi = -\pi/4$. Top Right: $\varphi = 0$. Bottom Left: $\varphi = \pi/8$. Bottom Right: $\varphi = \pi/2$.}
\label{TwoOneF1}
\end{figure}

\subsection{Two observation directions}
Now we consider two observation angles: $\varphi_1 = 0$ and $\varphi_2 = \pi/2$. We compute the indicators and superimpose them in one picture.
Since the observation directions are perpendicular to each other, the strips are perpendicular to each other in Fig.~\ref{TwoOneTwoF1}.  For both one object and
two objects, we see that intersection of the strips contains the support of $F$.
\begin{figure}
\begin{center}
\begin{tabular}{cc}
\resizebox{0.5\textwidth}{!}{\includegraphics{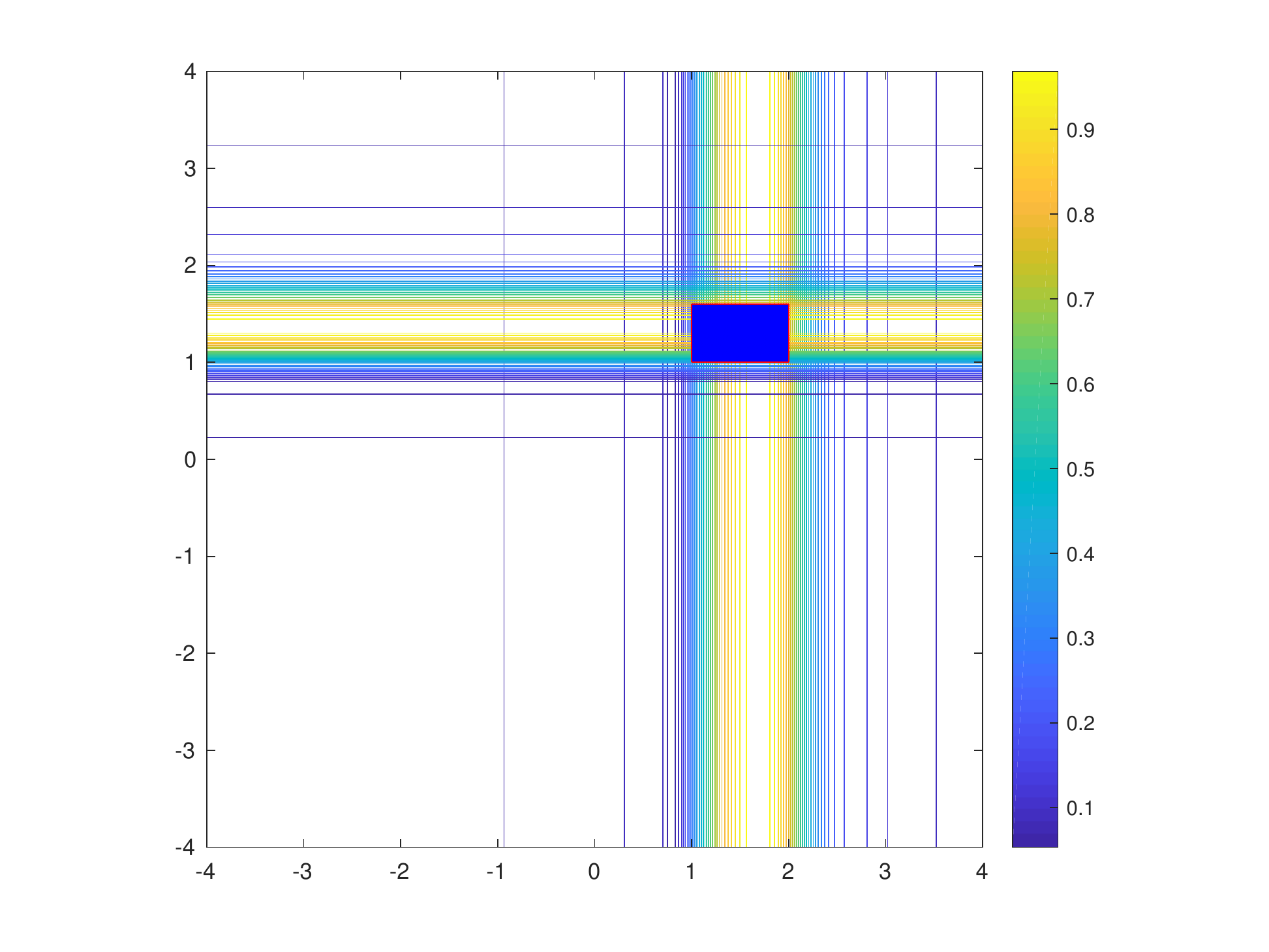}}&
\resizebox{0.5\textwidth}{!}{\includegraphics{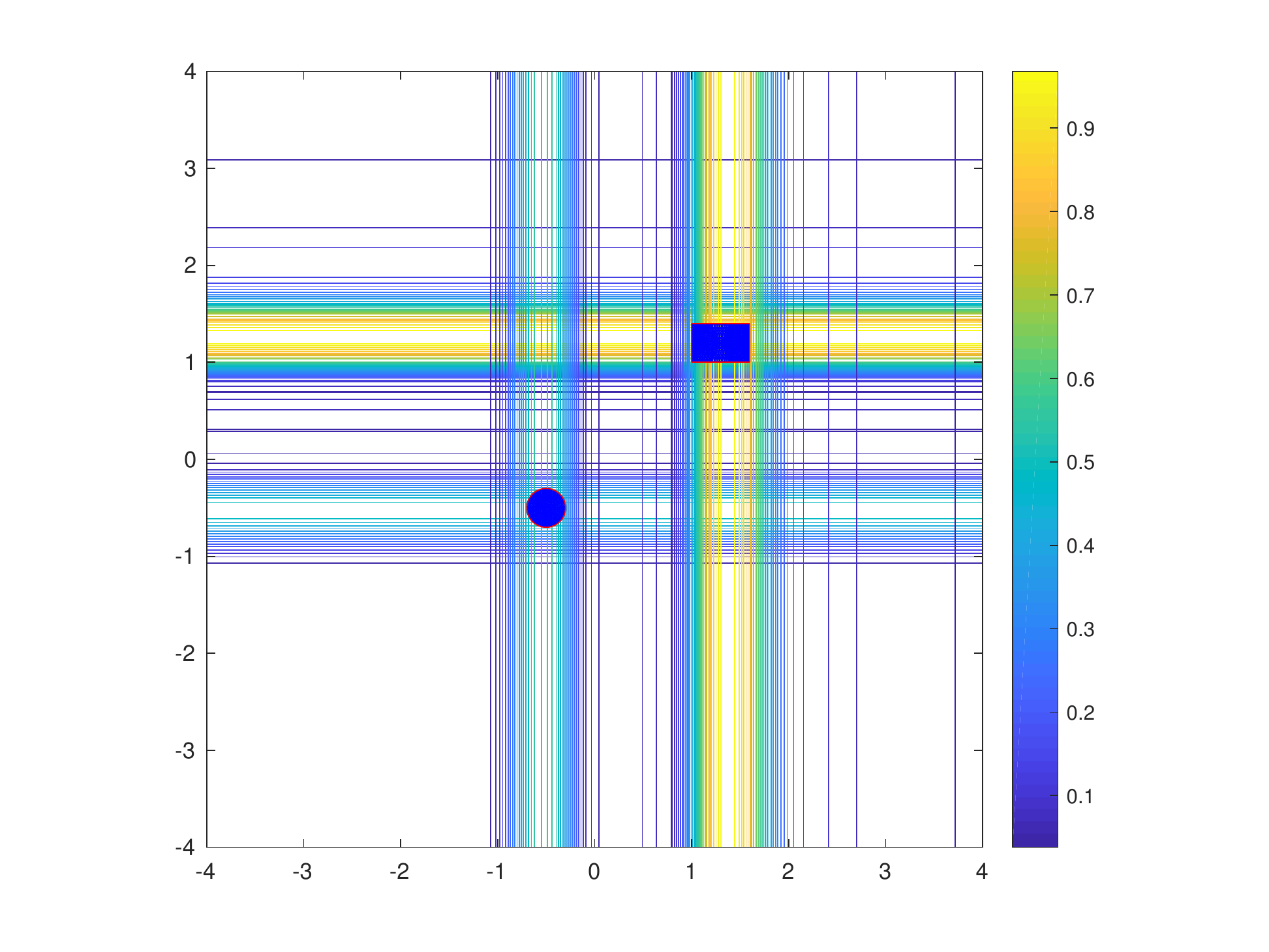}}
\end{tabular}
\end{center}
\caption{Reconstructions using two observation angles $\varphi = \pi/2$ and $\varphi = 0$ when $F=5$. Left: Single object; Right: Two objects.}
\label{TwoOneTwoF1}
\end{figure}

\subsection{Multiple observation directions}
Now we use $M=20$ observation angles $\varphi_j, j=1,\ldots, 20$ such that $\varphi_j = -\pi/2+j\pi/M$.
Note that $\varphi_j\in (-\pi/2, \pi/2]$.
We superimpose the indicators and show the results in Fig.~\ref{OneFullF1}.
The locations and sizes of support of $F$ are reconstructed correctly.
 \begin{figure}
\begin{center}
\begin{tabular}{cc}
\resizebox{0.5\textwidth}{!}{\includegraphics{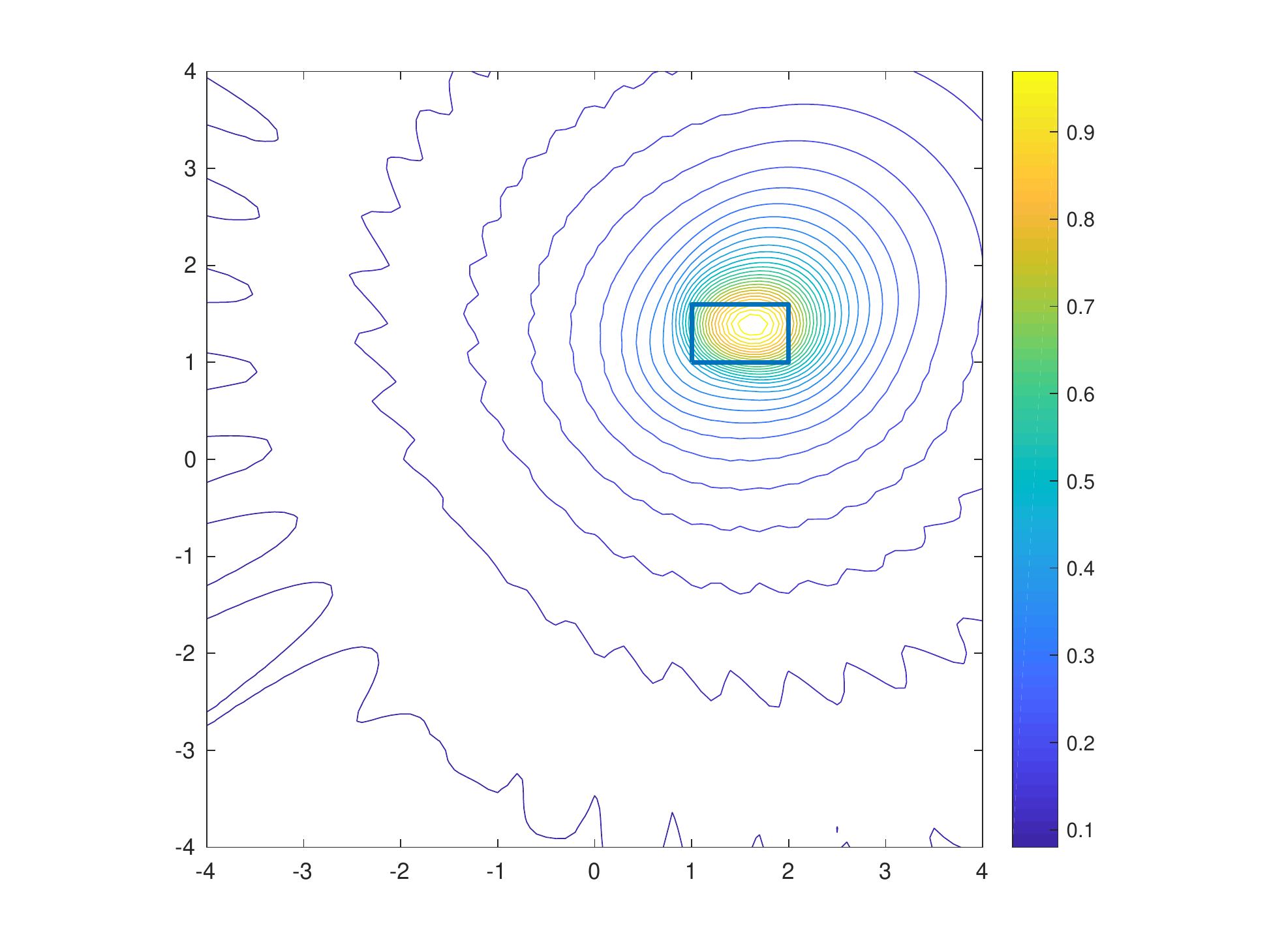}}&
\resizebox{0.5\textwidth}{!}{\includegraphics{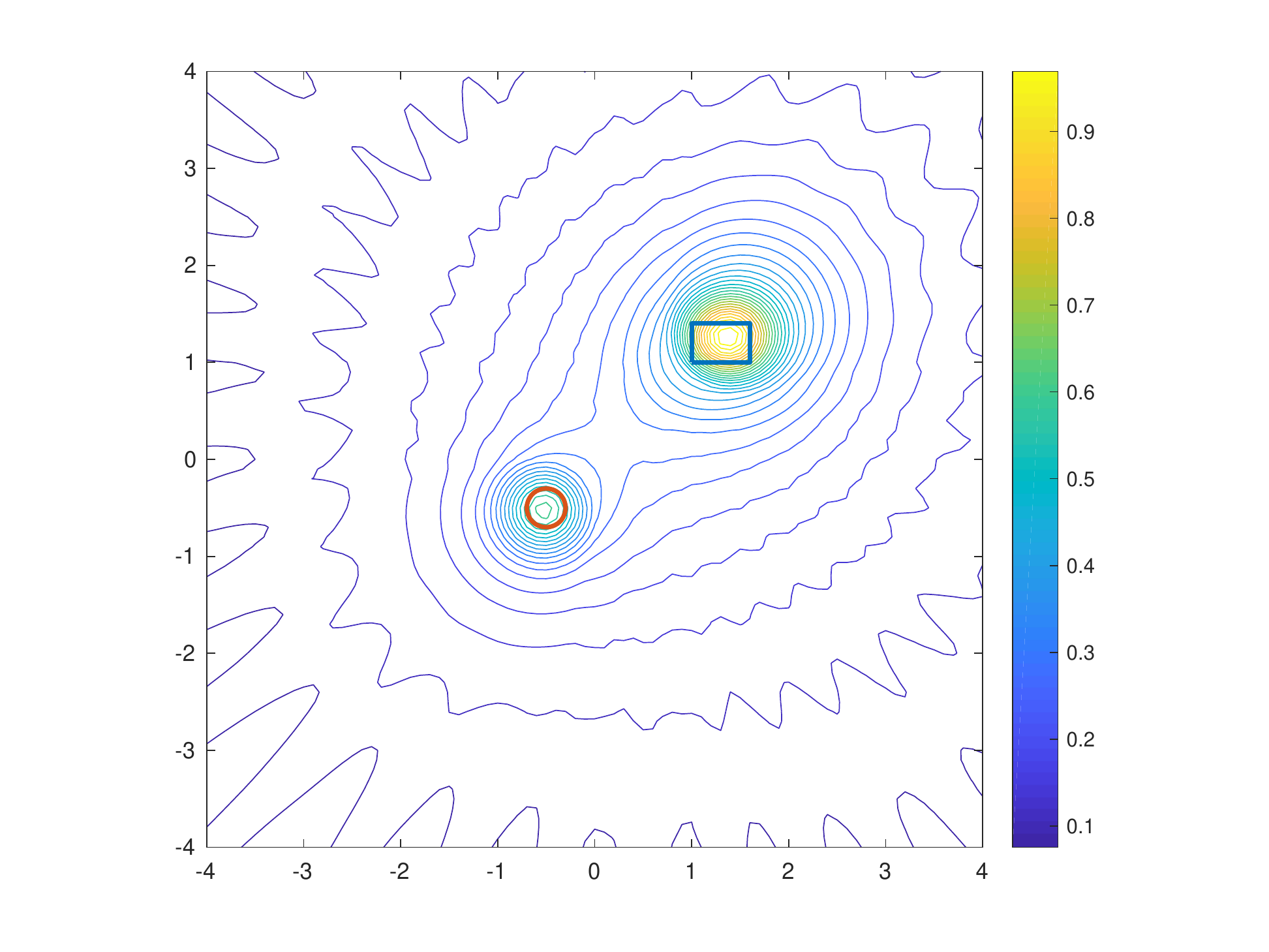}}
\end{tabular}
\end{center}
\caption{Reconstruction using multiple observation directions when $F=5$. Left: single object. Right: Two objects.}
\label{OneFullF1}
\end{figure}

Next, we choose $F(x,y) = x^2-y^2+5$, a function depending on the locations but independent of the wave number $k$.
The reconstruction is shown in Fig.~\ref{OneFullF2}.
\begin{figure}
\begin{center}
\begin{tabular}{cc}
\resizebox{0.5\textwidth}{!}{\includegraphics{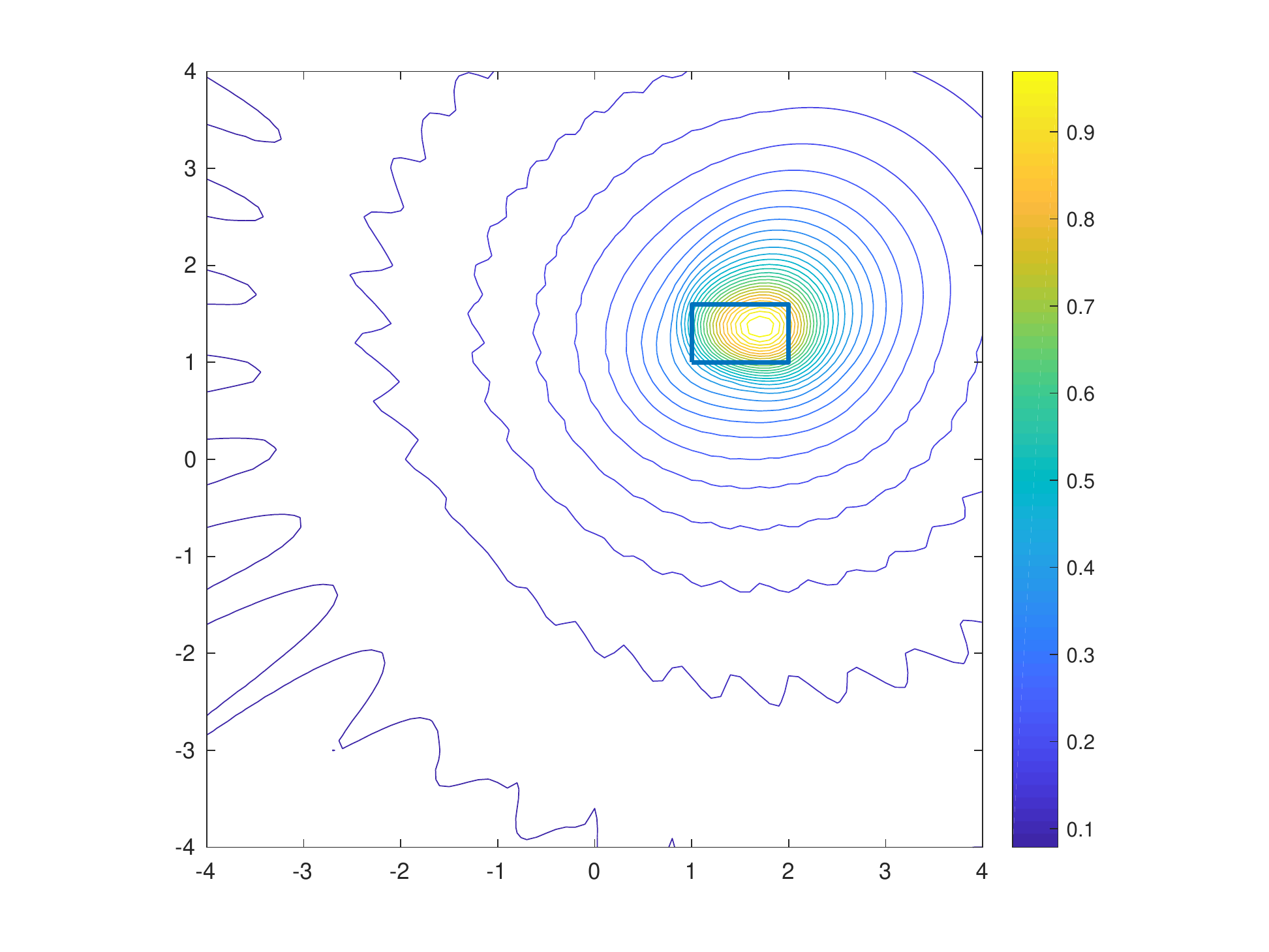}}&
\resizebox{0.5\textwidth}{!}{\includegraphics{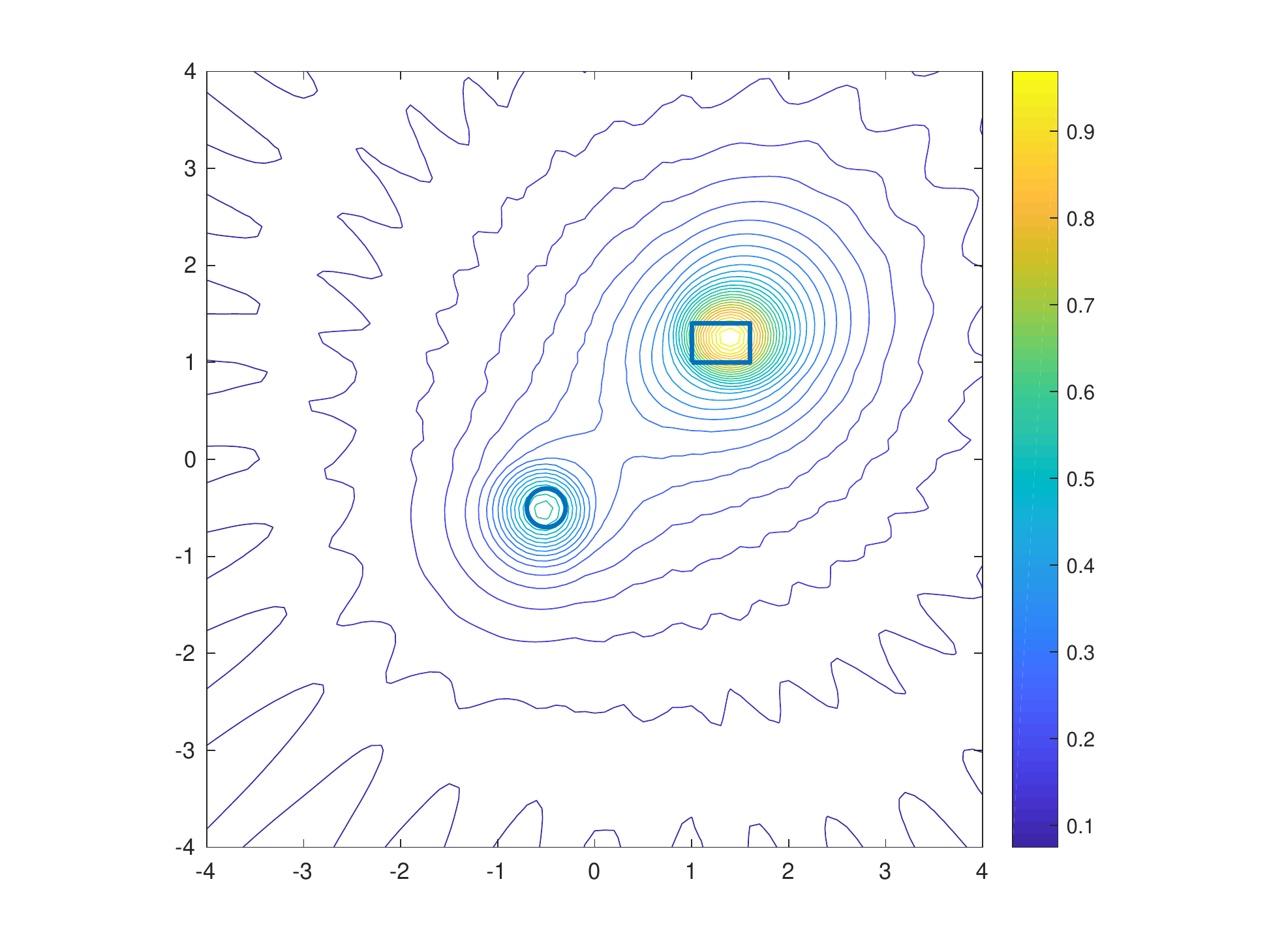}}
\end{tabular}
\end{center}
\caption{Reconstructions using multiple observation directions when $F(x,y) = x^2-y^2+5$. Left: single object. Right: Two objects.}
\label{OneFullF2}
\end{figure}

We also consider the case when the source $F$ depends on $k$ as well. Let
\[
F_1(x, y; k) = k^2(x^2-y^2+5),
\]
and
\[
F_2(x, y; k) = e^{ik(x\cos 3\pi/2 + y \sin 3\pi/2)}(x^2-y^2+5).
\]
The reconstructions are shown in Fig.~\ref{OneFullF3}.
Combining the previous Figures \ref{OneFullF1}-\ref{OneFullF2}, we observe that the reconstructions change slightly for different source function,
and the location and size of the support are always well captured.

\begin{figure}
\begin{center}
\begin{tabular}{cc}
\resizebox{0.5\textwidth}{!}{\includegraphics{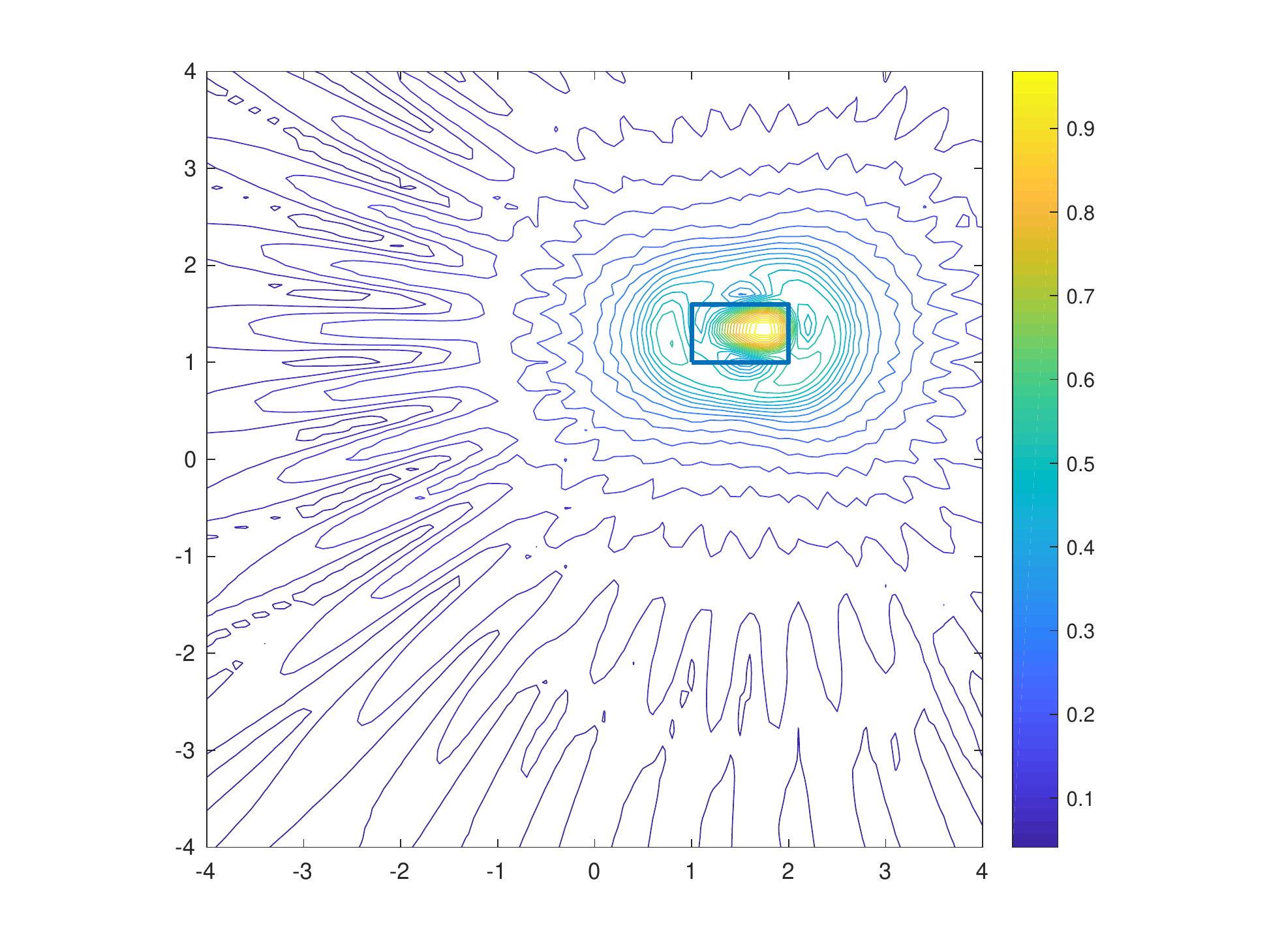}}&
\resizebox{0.5\textwidth}{!}{\includegraphics{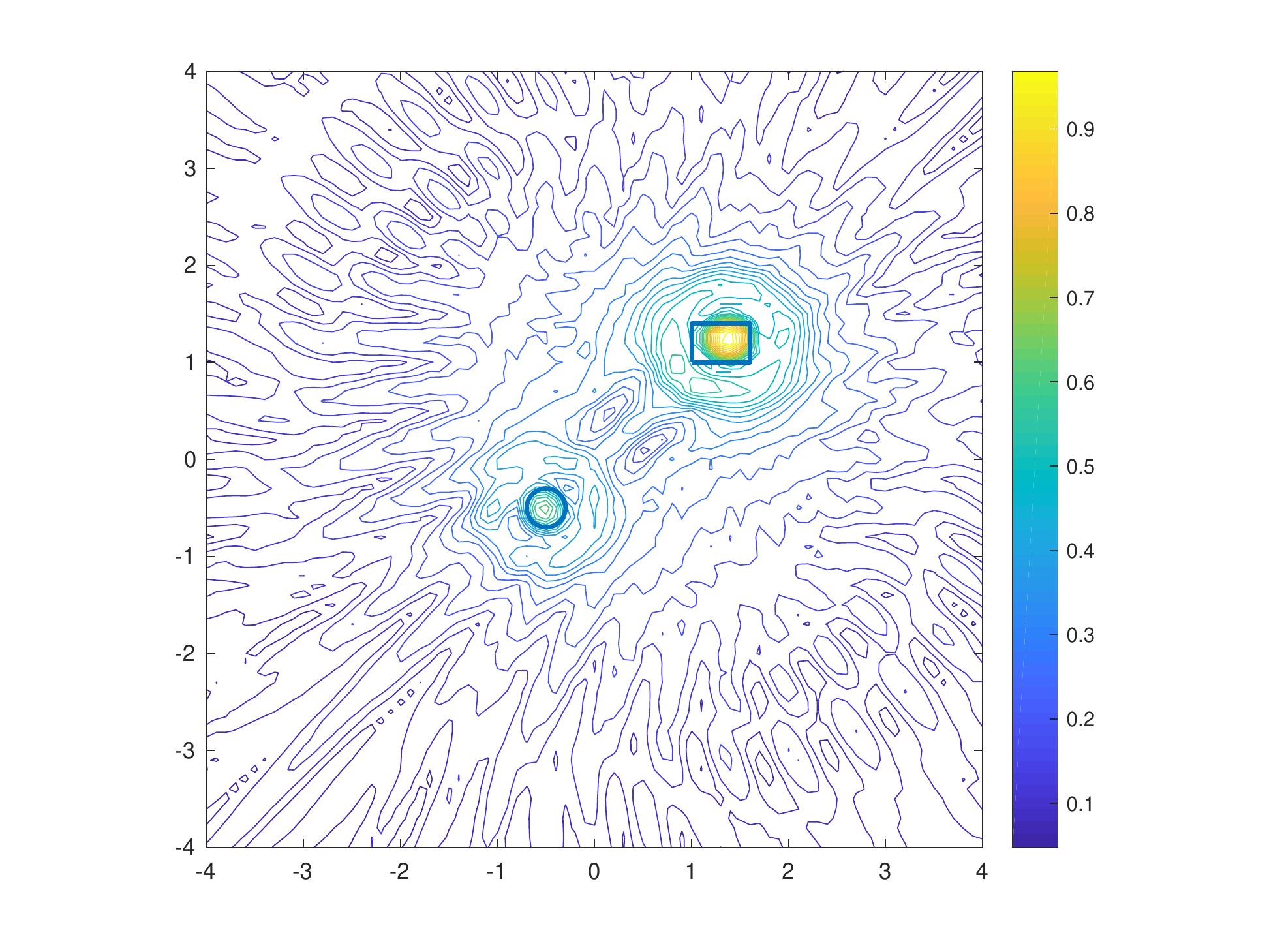}}\\
\resizebox{0.5\textwidth}{!}{\includegraphics{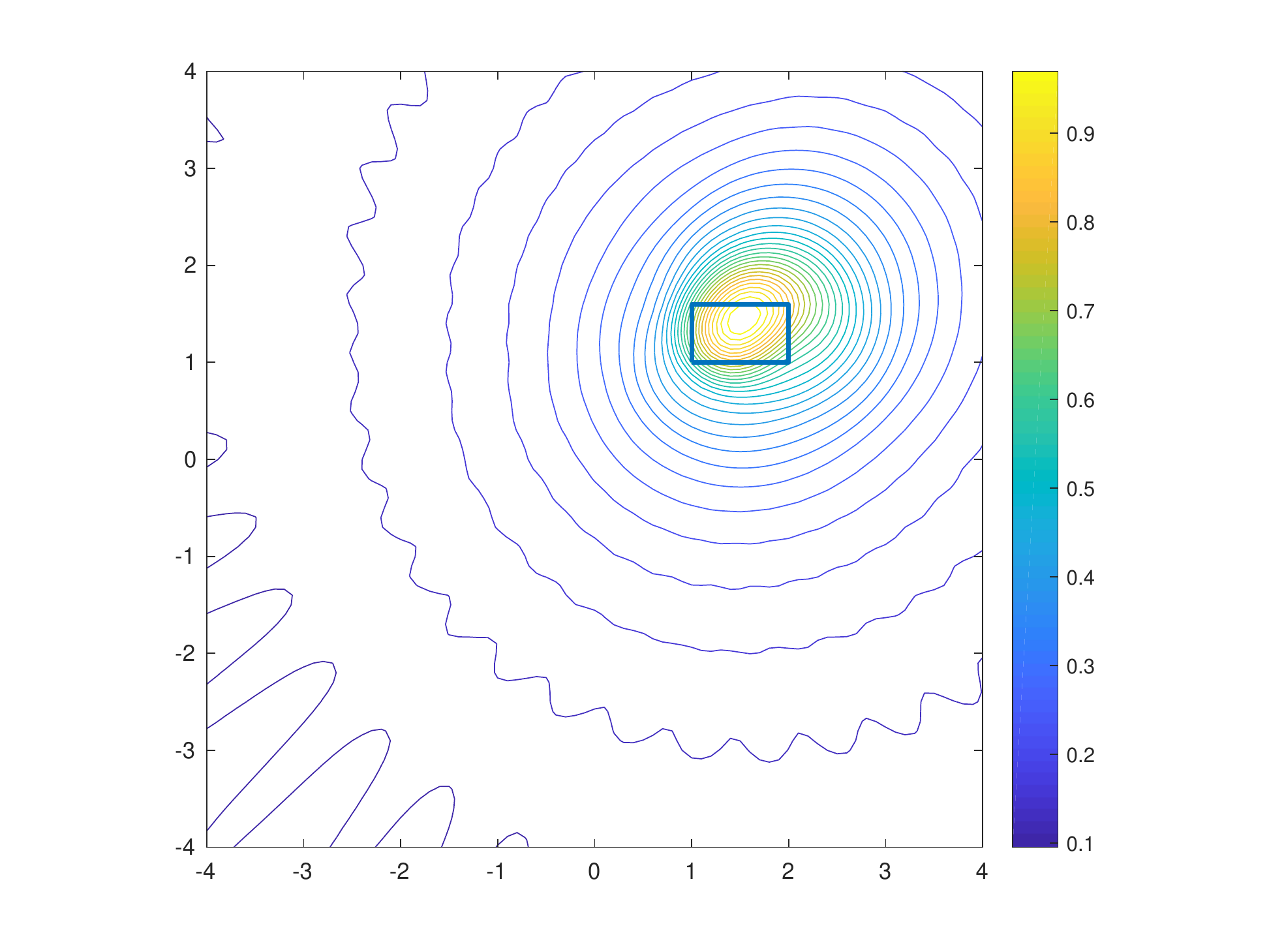}}&
\resizebox{0.5\textwidth}{!}{\includegraphics{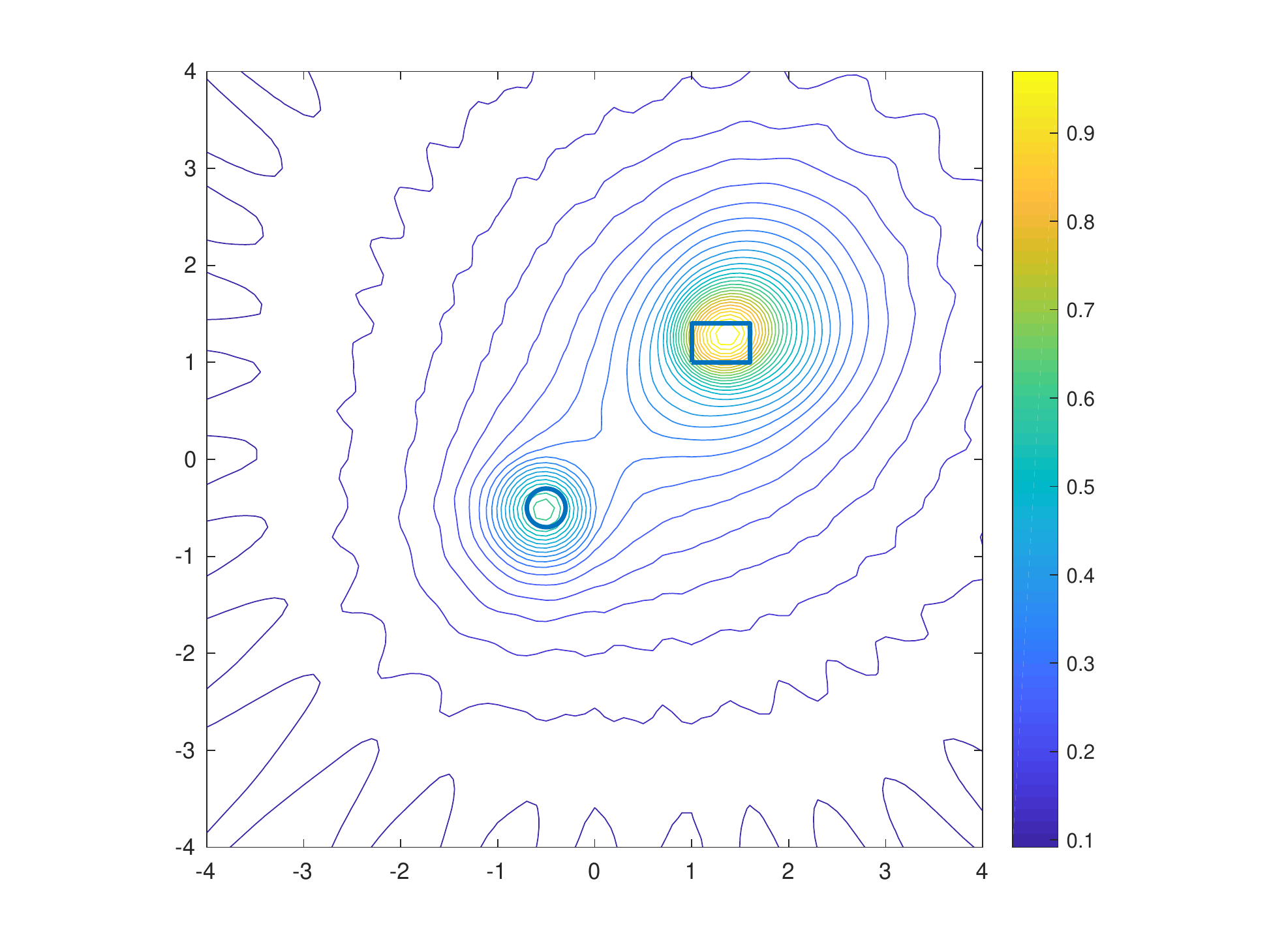}}
\end{tabular}
\end{center}
\caption{Reconstructions of sources depending on wave number $k$. Top:  $F_1(x, y; k) = k^2(x^2-y^2+5)$. Top Left: one object. Top Right: two objects.
Bottom: $F_2(x, y; k) = e^{ik(x\cos  3\pi/2+ y \sin 3\pi/2)}(x^2-y^2+5)$. Bottom Left: one object. Bottom Right: two objects.}
\label{OneFullF3}
\end{figure}

\subsection{Extended objects}
The sizes of supports of $F$ in the above examples are small compared with the wavelengths used.
The smallest wavelength is $\lambda_{min} = 2\pi/10 \approx 0.628$.

In Figures \ref{LargeOneF1}-\ref{LargeOneF2}, we show the reconstructions of larger objects.
Figure \ref{LargeOneF1} shows that the reconstructions of the source given in \eqref{f1} with a single observation direction.
This example further shows that Theorem \ref{theorem1} does not hold in general if the set in \eqref{set1} has
positive Lebesgue measure.  In fact, for observation direction $(0,1)$, a gap appears clearly in $(-1,1)$.

\begin{figure}
\begin{center}
\begin{tabular}{cc}
\resizebox{0.5\textwidth}{!}{\includegraphics{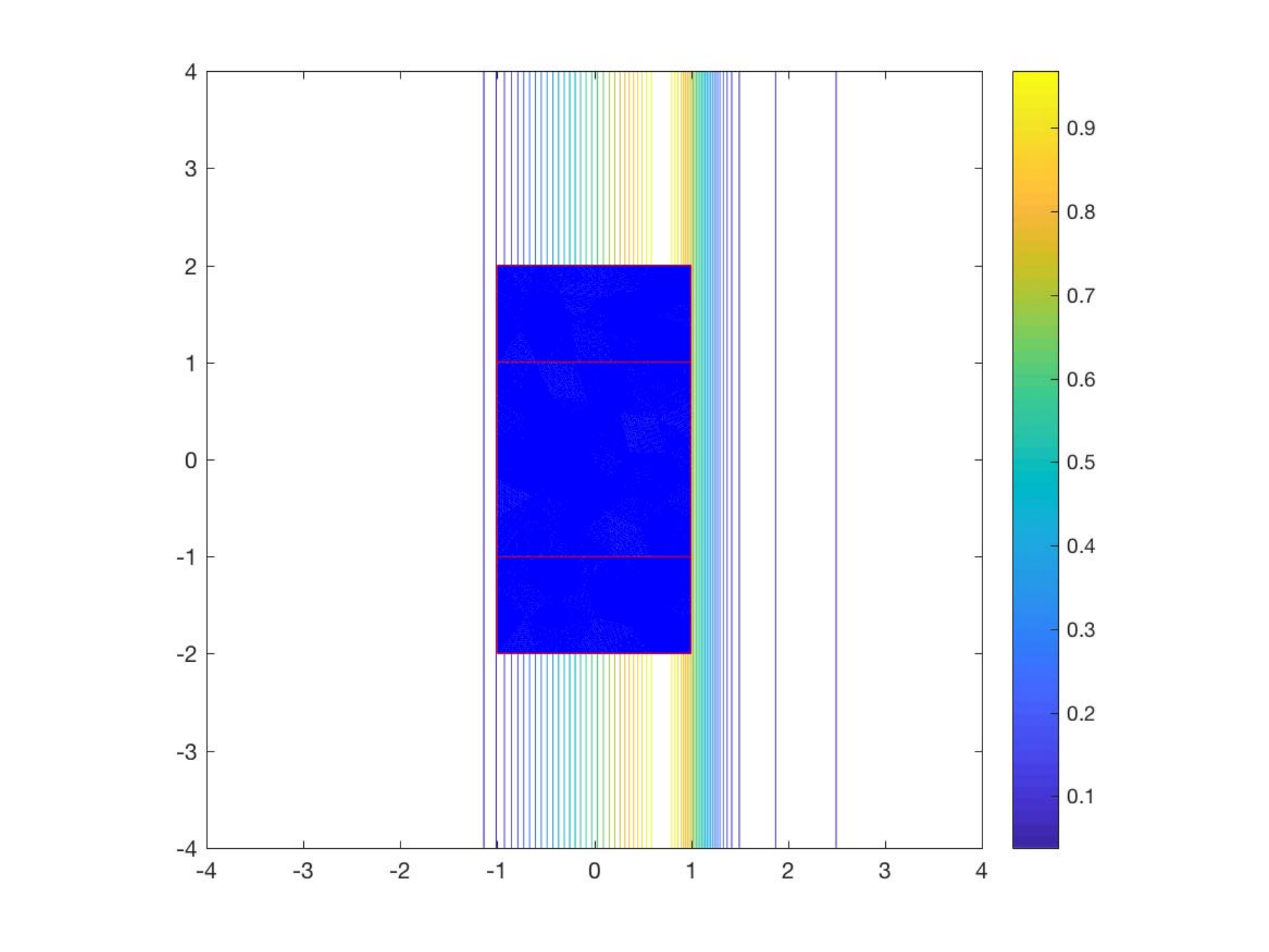}}&
\resizebox{0.5\textwidth}{!}{\includegraphics{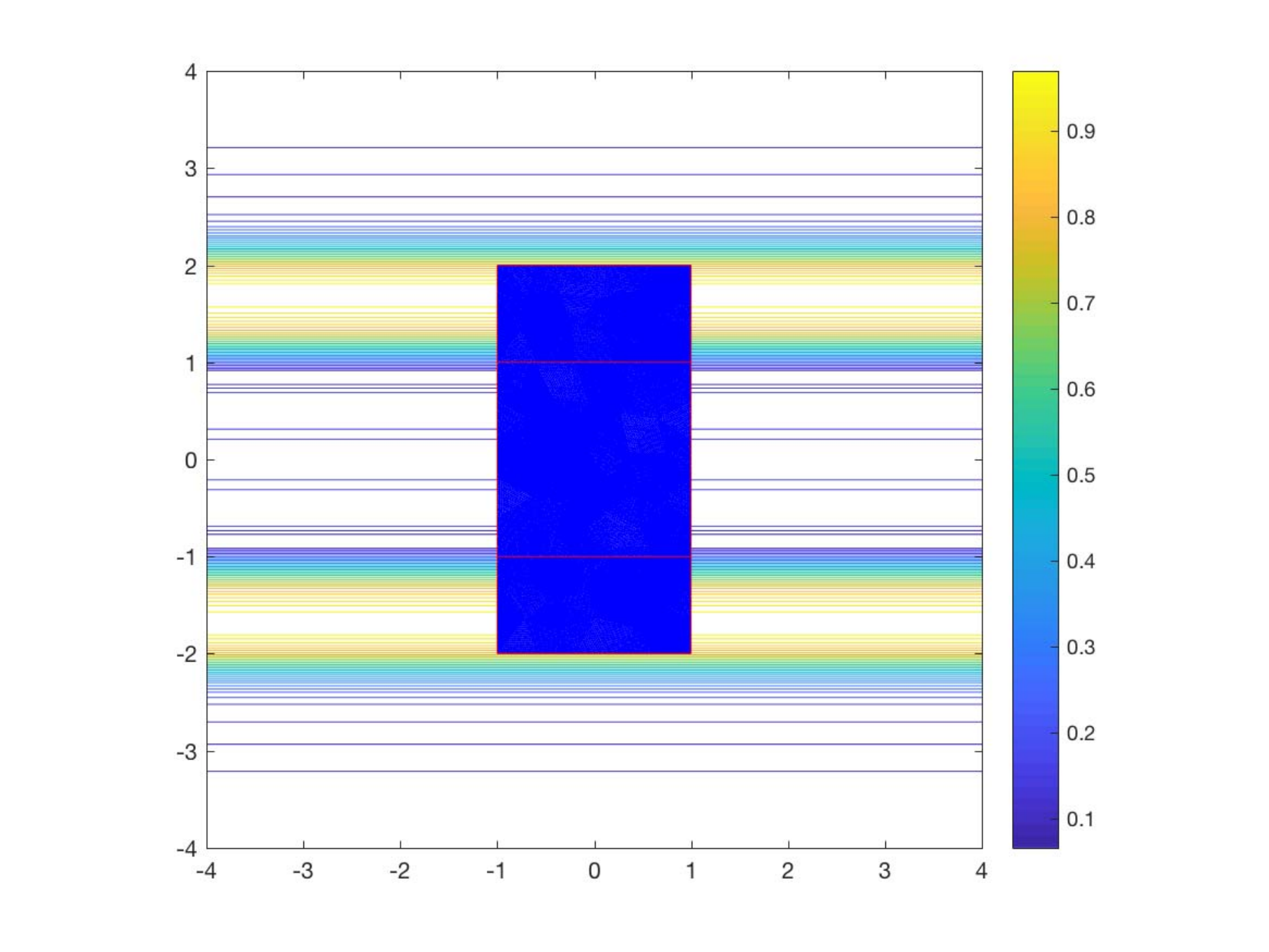}}\\
\resizebox{0.5\textwidth}{!}{\includegraphics{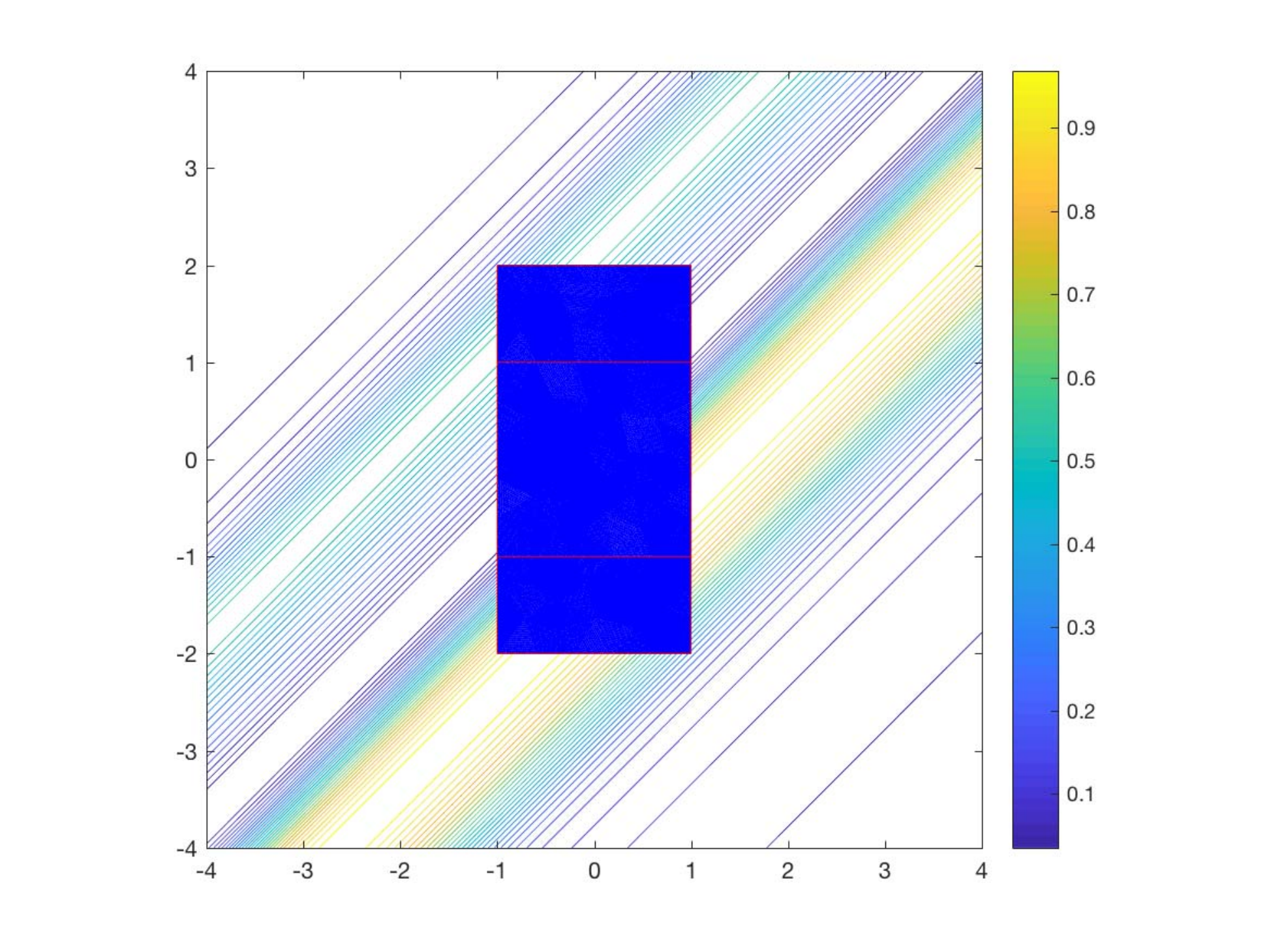}}&
\resizebox{0.5\textwidth}{!}{\includegraphics{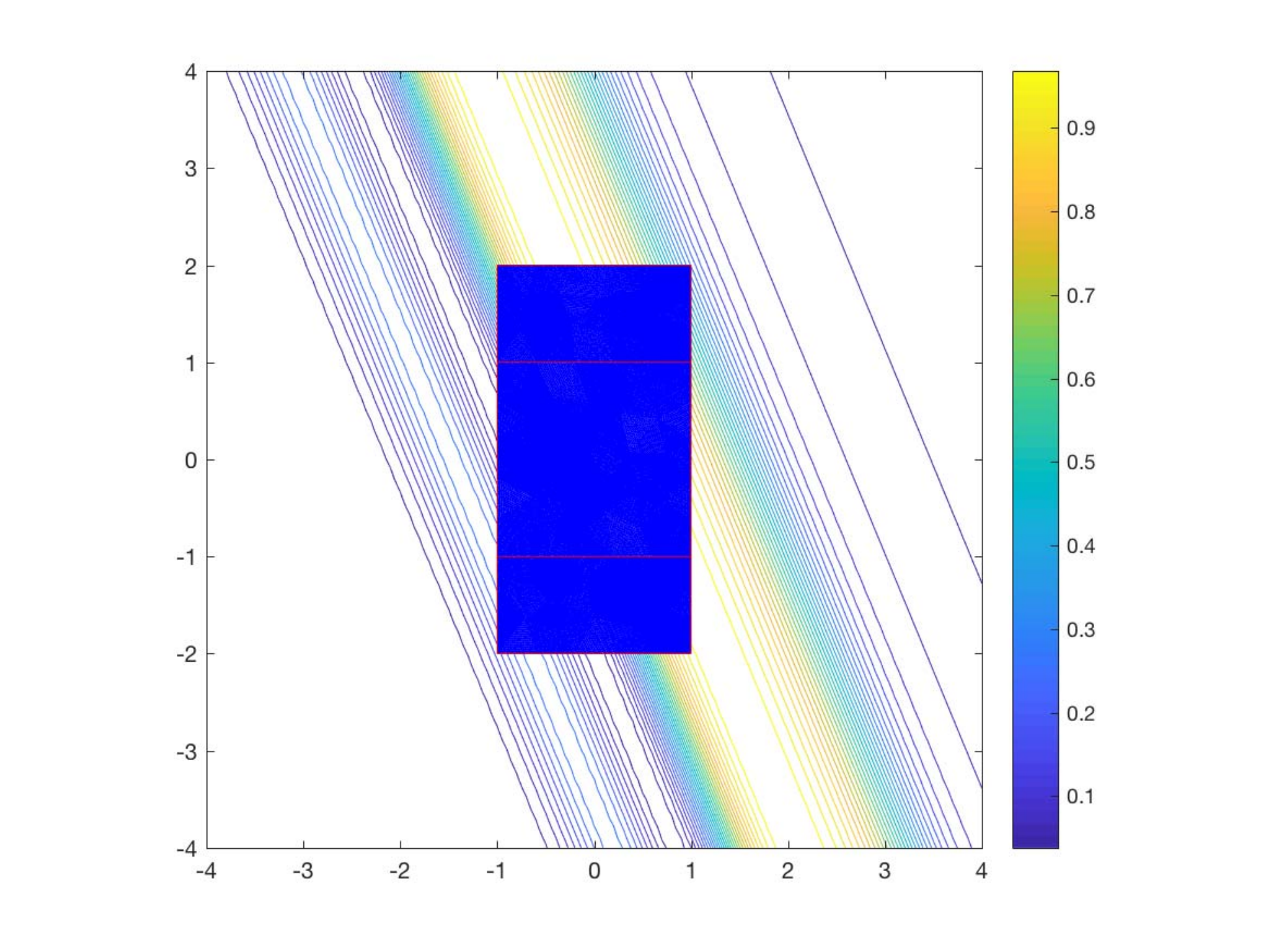}}
\end{tabular}
\end{center}
\caption{Reconstructions of larger object with $F$ given in \eqref{f1} by using a single observation direction.
        Top Left: $\varphi = 0$. Top Right: $\varphi = \pi/2$. Bottom Left: $\varphi = -\pi/4$. Bottom Right: $\varphi = \pi/8$.}
\label{LargeOneF1}
\end{figure}

In Fig.~\ref{LargeOneF2}, we show the reconstructions of larger objects with $M=20$ observation directions.
One is an equilateral triangle with vertices
\[
(-2, 0), \quad (1, 0),  \quad (-1/2, 3/2\sqrt{3}-1).
\]
The second one is a thin slab given by $(-2,2) \times (0, 0.1)$.
The results indicate that shorter wavelength could lead to better reconstruction.
In particular, not only the location and size, but also the shape of the support can be well reconstructed.
\begin{figure}
\begin{center}
\begin{tabular}{cc}
\resizebox{0.5\textwidth}{!}{\includegraphics{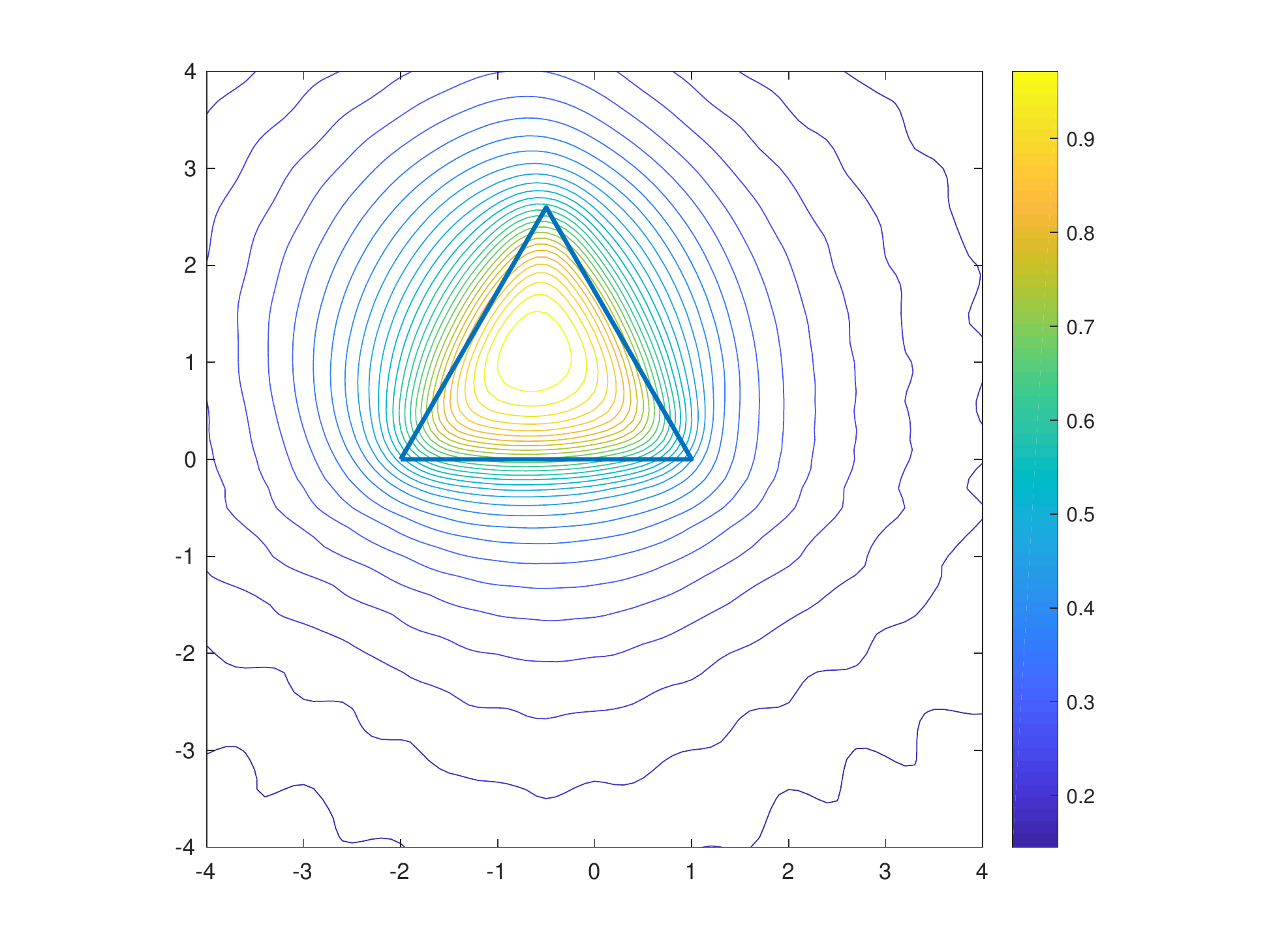}}&
\resizebox{0.5\textwidth}{!}{\includegraphics{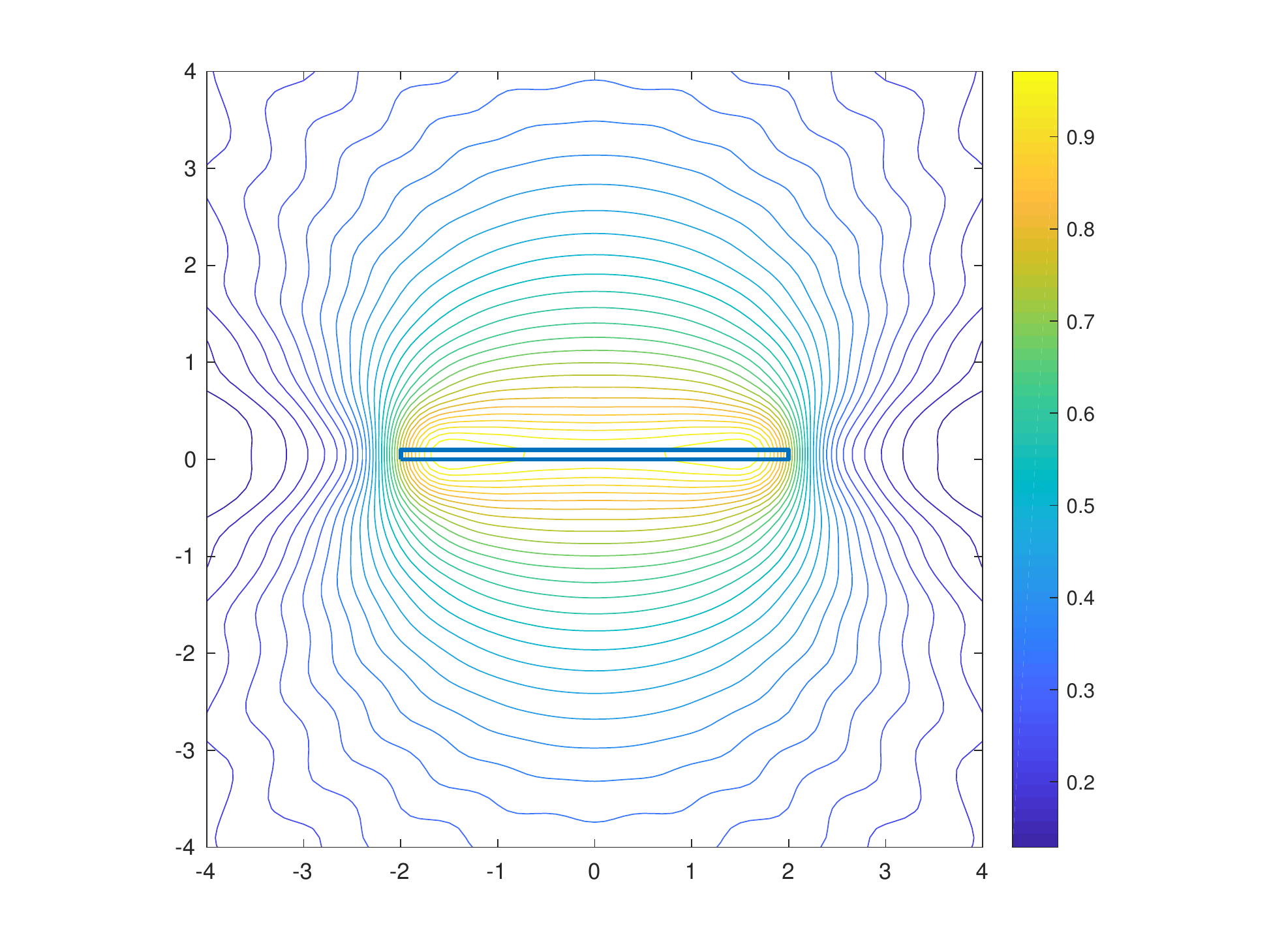}}
\end{tabular}
\end{center}
\caption{Reconstructions of larger objects when $F(x,y) = 5$. Left: triangle. Right: thin bar.}
\label{LargeOneF2}
\end{figure}

\section*{Acknowledgement}
The research of G. Hu is supported by the NSFC grant (No. 11671028) and NSAF grant (No. U1530401).
The research of X. Liu is supported by the NNSF of China under grant 11571355 and the Youth Innovation Promotion Association, CAS.
The research of J. Sun is partially supported by NSF DMS-1521555 and NSFC Grant (No. 11771068)

  \end{document}